\journalname{Mathematics of Control, Signals, and Systems}
\DeclareMathAlphabet{\bit}{OML}{cmm}{b}{it}
\def\<{\leqslant}           
\def\>{\geqslant}           
\def\d{\partial}
\def\wh{\widehat}
\def\Re{\mathrm{Re}}   
\def\cH{\mathcal{H}}   
\def\mN{\mathbb{N}}    
\def\mR{\mathbb{R}}    
\def\mC{\mathbb{C}}    
\def\Tr{\mathrm{Tr}}       
\def\rT{\mathrm{T}}        
\def\rF{\mathrm{F}}
\def\bE{\mathbf{E}}    
\def\rprod{\mathop{\overrightarrow{\prod}}}
\def\lprod{\mathop{\overleftarrow{\prod}}}
\def\esssup{\mathop{\mathrm{ess\, sup}}}
\def\bra{\langle}
\def\ket{\rangle}
\def\re{\mathrm{e}}        
\def\rd{\mathrm{d}}        
\def\fL{\mathfrak{L}}
\def\cL{\mathcal{L}}
\def\br{\mathbf{r}}
\def\x{\times}
\def\fG{\mathfrak{G}}
\def\cC{\mathcal{C}}
\def\cG{\mathcal{G}}
\def\cI{\mathcal{I}}
\def\cP{\mathcal{P}}
\def\cA{\mathcal{A}}
\def\cB{\mathcal{B}}
\def\cE{\mathcal{E}}
\def\var{\mathbf{var}}
\def\cT{{\mathcal T}}
\def\mH{\mathbb{H}}
\def\mS{\mathbb{S}}
\def\eps{\epsilon}
\def\Ups{\Upsilon}
\def\ups{\upsilon}
\def\phi{\varphi}
\def\diag{\mathop{\mathrm{diag}}}    
\begin{document}
\title{Covariance-analytic performance criteria, Hardy-Schatten norms and
Wick-like ordering of cascaded systems
\thanks{This work is supported by the Australian Research Council  grant DP210101938.}
}

\titlerunning{Covariance-analytic criteria and Hardy-Schatten norms}        

\author{Igor G. Vladimirov, \quad Ian R. Petersen}

\authorrunning{I.G.Vladimirov, I.R. Petersen}

\institute{Australian National University, Canberra, Australia\\
              \email{igor.g.vladimirov@gmail.com, i.r.petersen@gmail.com}
}

\date{Received: date / Accepted: date}

\maketitle

\begin{abstract}
This paper is concerned with linear stochastic systems whose output is a stationary Gaussian random process related by an integral operator to a standard Wiener process at the input.
We consider a performance criterion which involves the trace of an analytic function of the spectral density of the output process.  This class of ``covariance-analytic''  cost functionals includes the usual mean square and risk-sensitive criteria as particular cases. Due to the presence of the ``cost-shaping'' analytic function, the performance criterion is related to higher-order Hardy-Schatten norms of the system transfer function.  These norms have links with the asymptotic properties of cumulants of finite-horizon quadratic functionals of the system output and satisfy variational inequalities pertaining to system robustness to statistically uncertain inputs.
In the case of strictly proper finite-dimensional systems, governed in state space by linear stochastic differential equations, we develop a method for recursively computing the Hardy-Schatten norms through a recently proposed technique of rearranging cascaded linear systems, which resembles the Wick ordering of annihilation and creation operators in quantum mechanics. The resulting computational procedure involves a recurrence sequence of solutions to algebraic Lyapunov equations and represents the covariance-analytic cost as the squared  $\cH_2$-norm of an auxiliary cascaded system. These results are also compared with an alternative approach which uses higher-order derivatives of stabilising solutions of  parameter-dependent algebraic Riccati equations. 
\keywords{
Linear stochastic system
\and
Output energy cumulant
\and
Hardy-Schatten norm
\and
Wick ordering
\and
Spectral factorisation
\and
Algebraic Lyapunov and Riccati equations
}
\subclass{
93C05          
\and
93C35   
\and
93D25          
\and
93E20          
\and
93B52         
\and
93B35           
\and
30H10           
\and
60H10          
\and
60G15           
}
\end{abstract}

\section{Introduction}

Multivariable linear continuous-time invariant stochastic systems, which can be represented as Toeplitz integral operators acting on an input disturbance in the form of a multidimensional Wiener process or more complicated random processes,  provide an important class of tractable models of stochastic dynamics. These models result from linearised description of physical systems such as RLC circuits with thermal noise, vehicle suspension subject to rough  terrain,  or  flexible structures interacting with turbulent flows.

The statistical properties (including the spectral density or pertaining to higher order moments) of the output of a linear system are related to those of the input. In the frequency domain, the input-output characteristics of the system are captured in its transfer function, through which the input and output spectral densities are related to each other.  In the case of a standard Wiener process at the input, the output spectral density, as a function of the frequency,  quantifies the contribution from different modes to the variance of the output process,  which coincides with the squared $\cH_2$-norm of the transfer function.  This mean square functional describes the infinite-horizon growth rate for the integral of the squared Euclidean norm of the system output over a bounded time interval. This integral yields a random variable, which depends on the output in a quadratic fashion and can be interpreted as the output energy of the system. In turn, the asymptotic behaviour of fluctuations in the output energy, quantified in terms of its variance,  is closely related to the $\cH_4$-norm of the transfer function  in an appropriate Hardy space.
The $\cH_4$-norm uses the Schatten 4-norm of matrices \cite{HJ_2007} (involving the fourth power of the singular values),  and its discrete-time version   appeared in  \cite[Lemma 2]{VKS_1996} in the context of anisotropy-based robust control.

The $\cH_2$ and $\cH_4$-norms underlie the linear quadro-quartic Gaussian (LQQG) approach \cite{VP_2010b}, which extends   LQG control towards minimising not only the first, but also the second-order cumulants of the output energy in the framework of the disturbance attenuation paradigm.    These norms are the first two elements in a sequence of the Hardy-Schatten $\cH_{2k}$-norms, $k = 1, 2, 3, \ldots$, in the corresponding spaces  \cite{S_2005} of matrix-valued transfer functions, holomorphic in the right half-plane and endowed with the $L^{2k}$-norm of their singular values (or, equivalently, the $L^k$ norm for the eigenvalues of the output spectral density).

The Hardy-Schatten norms of all orders participate (though implicitly) in the performance criteria of the risk-sensitive  and minimum entropy control theories \cite{MG_1990}. These theories use the cumulant-generating function of the output energy scaled by a risk sensitivity parameter. The resulting infinite-horizon risk-sensitive performance index is organised as a power series with respect to this parameter, with the coefficients of the series being the growth rates of the output energy cumulants.  The  cumulant rates are related to the corresponding powers of the Hardy-Schatten norms of the transfer function, which makes the risk-sensitive criterion a specific linear combination of appropriately powered $\cH_{2k}$-norms,  with the coefficients being specified by the risk sensitivity parameter.

As suggested in \cite{VP_2010b}, ``reassembling'' the terms of the risk-sensitive cost with different weights (not necessarily governed by a single parameter) gives rise to a wide class of performance criteria (in the form of linear combinations of powers of the Hardy-Schatten norms) and the corresponding  output energy cumulant (OEC) control problems for linear stochastic systems. This class contains the LQG, LQQG and risk-sensitive approaches as special cases which explore this freedom to a certain extent.
At the same time, the risk-sensitive criteria  play a distinctive role in the robust performance characteristics of the system in the form of the worst-case LQG costs  in the presence of statistical uncertainty about the input noise with a  Kullback-Leibler relative entropy description \cite{DJP_2000,PJD_2000}.
It is also relevant to equip the extended OEC approach with its own variational principle for a rational weighting of the Hardy-Schatten norms in regard to appropriately modified robustness properties of the system.

The practical implementation of the OEC approach requires, as its ingredient, the development of state-space methods for computing the Hardy-Schatten norms and related cost functionals, which is the main purpose of the present paper. We are concerned with linear time-invariant stochastic systems,   which act as an integral operator on a standard Wiener process at the input and produce a stationary Gaussian random process at the output. Starting from frequency-domain formulations, we propose a performance criterion which involves the trace of an analytic function evaluated at the output spectral density. This class
of ``covariance-analytic''  cost functionals includes the standard mean square and risk-sensitive criteria as particular cases. Due to the presence of the ``cost-shaping'' analytic function in its definition, this performance criterion admits a series expansion involving all the $\cH_{2k}$-norms of the system. In combination with the Legendre transformation, applied to trace functions of matrices \cite{L_1973,VP_2010a},  the structure of the covariance-analytic cost gives rise to a variational inequality for this functional
concerning statistical uncertainty when the system input is a more complicated Gaussian process (with correlated increments unlike the Wiener process). 

For finite-dimensional systems, governed in state space by linear stochastic differential equations  (SDEs), we develop a method for recursively computing the Hardy-Schatten norms through a recently proposed approach \cite{VP_2022}  to rearranging cascaded linear systems which arise from powers of a rational spectral density. This ``system transposition'' technique, oriented to a special  spectral factorisation problem,  resembles the Wick ordering of noncommuting annihilation and creation operators in quantum mechanics \cite{J_1997,W_1950}. The resulting computational procedure involves a recurrence sequence of solutions to algebraic Lyapunov equations (ALEs)  and represents the covariance-analytic cost as the squared  $\cH_2$-norm of an auxiliary cascaded system. These results are compared with an alternative approach which uses another set of ALEs for computing the higher-order derivatives of stabilising solutions of parameter-dependent algebraic Riccati equations.

We also mention that a related yet different class of nonquadratic integral performance criteria, involving power series over noncommuting variables, was discussed in a quantum feedback control framework, for example,   in \cite{P_2014}. Also, the discrete-time counterparts of the Hardy-Schatten norms of all orders play an important role  \cite{VKS_1999} in  the anisotropy-constrained versions of the $\ell^2$-induced operator norms of systems.

The paper is organised as follows.
Section~\ref{sec:sys} specifies the class of linear stochastic systems and covariance-analytic costs under consideration.
Section~\ref{sec:cum} describes the Hardy-Schatten norms and discusses their links with the growth rates of the  output energy cumulants.
Section~\ref{sec:var}  provides a variational inequality for the covariance-analytic functional and related robustness properties.
Section~\ref{sec:fin} specifies a class of strictly proper finite-dimensional systems in state space.
Section~\ref{sec:inf} obtains a spectral factorisation for a particular cascade of systems.
Section~\ref{sec:HS} applies this factorisation to the state-space computation of the
Hardy-Schatten norms and covariance-analytic costs.
Section~\ref{sec:diff} discusses  an alternative approach to their computation using a parameter-dependent  Riccati equation.
Section~\ref{sec:num} illustrates both procedures for computing the Hardy-Schatten norms by a numerical example.
Section~\ref{sec:conc} provides concluding remarks.

\section{Linear stochastic systems and covariance-analytic costs}
\label{sec:sys}

Let $Z:= (Z(t))_{t\in \mR}$ be an $\mR^p$-valued   stationary Gaussian random process \cite{IR_1978} produced as the output of a linear causal time-invariant
system from an $\mR^m$-valued   standard Wiener process $W:= (W(t))_{t\in \mR}$ at  the input\footnote{the process $W$ does not require a particular initialisation in the infinitely distant past, since only its increments are relevant here}: \begin{equation}
\label{ZfW}
    Z(t)
    =
    \int_{-\infty}^t
    f(t-\tau)
    \rd W(\tau),
    \qquad
    t\in \mR.
\end{equation}
The impulse response  $f: \mR_+ \to \mR^{p\x m}$ to the incremented input (with $\mR_+:= [0,+\infty)$) is assumed to be square integrable,  $f \in L^2(\mR_+, \mR^{p\x m})$, thus securing the existence of the Ito integral in (\ref{ZfW}),  along with the isometry
\begin{equation}
\label{iso}
    \bE(|Z(t)|^2)
    =
    \int_{\mR_+}
    \|f(\tau)\|_\rF^2 \rd \tau
    <
    +\infty
\end{equation}
for the variance of the process $Z$. Here, $\bE(\cdot)$ is expectation, and $\|M\|_\rF:= \sqrt{\bra M,M\ket_\rF}$ is the Frobenius norm \cite{HJ_2007}  generated by the inner product $\bra M, N\ket_\rF:= \Tr (M^*N)$ of real or complex matrices, with $(\cdot)^* := (\overline{(\cdot)})^{\rT}$ the complex conjugate transpose. We will also write $\|M\|_Q:= \|\sqrt{Q} M\|_\rF =  \sqrt{\Tr(M^* Q M)}$ for the weighted Frobenius (semi-) norm associated  with a positive (semi-) definite Hermitian matrix $Q$. The process $Z$ in (\ref{ZfW}) has zero mean and a continuous covariance function $K \in C(\mR, \mR^{p\x p})$, which specifies the two-point covariance matrix
\begin{align}
\nonumber
    K(s-t)
    & :=
    \bE(Z(s) Z(t)^\rT)
    =
    \int_{-\infty}^{\min(s,t)}
    f(s-\tau)f(t-\tau)^{\rT}
    \rd
    \tau\\
\label{EZZ}
    & =
    \frac{1}{2\pi}
    \int_{\mR}
    \re^{i\omega (s-t)}
    S(\omega)
    \rd \omega
    =
    K(t-s)^{\rT},
    \qquad
    s, t \in \mR,
\end{align}
and its particular case, the one-point covariance matrix
\begin{equation}
\label{K0}
    K(0)
    :=
    \bE(Z(t) Z(t)^\rT)
    =
    \int_{\mR_+}
    f(\tau)f(\tau)^{\rT}
    \rd \tau=
    \frac{1}{2\pi}
    \int_{\mR}
    S(\omega)
    \rd \omega,
\end{equation}
whose trace is given by (\ref{iso}). Here, $S: \mR\to \mH_p^+$ (with $\mH_p^+$ the set of positive semi-definite matrices in the subspace $\mH_p$ of complex Hermitian  matrices  of order $p$) is the spectral density of the process $Z$:
\begin{equation}
\label{S}
    S(\omega)
    :=
    \int_{\mR}
    \re^{-i \omega t}
    K(t)
    \rd t
    =
    \wh{F}(\omega) \wh{F}(\omega)^*.
\end{equation}
It is expressed in terms of the Fourier transform of the impulse response of the system given by
\begin{equation}
\label{Fhat}
    \wh{F}(\omega)
    :=
    F(i\omega)
    =
    \int_{\mR_+}
    \re^{-i \omega t}
    f(t)
    \rd t,
    \qquad
    \omega \in \mR,
\end{equation}
which is the
boundary value of the transfer function
\begin{equation}
\label{Ff}
  F(s)
  :=
  \int_{\mR_+}
  \re^{-st} f(t)\rd t,
\end{equation}
holomorphic in the open right half-plane  $\{s \in \mC: \Re s >0\}$.
Since the impulse response $f$ is square integrable, (\ref{Ff}) belongs to the Hardy space
$\cH_2^{p\x m}$ of $\mC^{p\x m}$-valued functions of a complex
variable, holomorphic in the open right half-plane and equipped with the
$\cH_2$-norm
\begin{equation}
\label{H2}
    \|F\|_2
     :=
    \sqrt{
        \frac{1}{2\pi}
        \int_{\mR}
        \|\wh{F}(\omega)\|_\rF^2
        \rd
    \omega}
    =
    \sqrt{
        \int_{\mR_+}
        \|f(t)\|_\rF^2
        \rd t
    }
    =
    \sqrt{
        \Tr K(0)
    },
\end{equation}
where the Plancherel identity is combined with (\ref{iso})--(\ref{Fhat}), and
$\|\wh{F}(\omega)\|_\rF^2 = \Tr S(\omega)$ is related to the output spectral
density (\ref{S}).

If the transfer function $F$ pertains to a plant-controller interconnection, and the process $Z$ consists of the dynamic variables whose small values are a preferred behaviour of the closed-loop system, the corresponding performance criteria usually employ cost functionals to be minimised over the controller parameters.  Such functionals include the mean square cost $\|F\|_2^2$ from (\ref{H2}) and the quadratic-exponential functional
\begin{equation}
\label{QE}
    \Xi(\theta)
    :=
    \lim_{T\to +\infty}
    \Big(
    \frac{1}{T}
    \ln
    \bE
    \re^{\frac{\theta}{2}\cE_T}
    \Big)
    =
    -
    \frac{1}{4\pi}
    \int_{\mR}
    \ln\det
    (I_p - \theta S(\omega))
    \rd \omega ,
\end{equation}
used in the risk-sensitive  and minimum entropy
control theories \cite{MG_1990}.   Here,
for a finite
time horizon $T>0$, the random variable
\begin{equation}
\label{ET}
    \cE_T
    :=
    \int_{0}^{T}
    |Z(t)|^2
    \rd t
\end{equation}
can be interpreted as an ``output energy'' of the system over the
time interval $[0,T]$, and $\theta>0$ is a risk sensitivity parameter  constrained by
\begin{equation}
\label{tgood}
    \theta < \|F\|_\infty^{-2},
\end{equation}
with
\begin{equation}
\label{Hinf}
    \|F\|_\infty
    :=
    \esssup_{\omega \in \mR}
    \|\wh{F}(\omega)\|
    =
    \esssup_{\omega\in \mR}\sqrt{\lambda_{\max}(S(\omega))}
\end{equation}
the $\cH_\infty$-norm of the transfer function $F$. Also, $\|\cdot\|$ is the operator matrix norm, and $\lambda_{\max}(\cdot)$ is the largest eigenvalue of a matrix with a real spectrum. The corresponding ``energy rate''
\begin{equation*}
\label{eT}
    \eps_T
    :=
    \frac{1}{T}\cE_T
\end{equation*}
satisfies
\begin{equation*}
\label{EF}
    \bE \eps_T = \|F\|_2^2,
    \qquad
    T >0,
\end{equation*}
along with the mean square convergence
$$    \mathop{\rm l.i.m.}_{T\to +\infty}
    \eps_T
    =
    \| F\|_2^2.
$$
The latter holds  in view of the relations
$$
    \var(\eps_T)
    =
    \bE((\eps_T - \|F\|_2^2)^2)
    \sim
    \frac{2}{T} \|F\|_4^4,
    \qquad
    {\rm as}\ T\to +\infty,
$$
from \cite[Lemma 1]{VP_2010b},
provided the output spectral density $S$ in (\ref{S}) is square integrable (and hence, so also is the covariance function $K$ in (\ref{EZZ})), in which case the transfer function $F$ of the system has a finite $\cH_4$-norm\footnote{its discrete-time counterpart is employed as a subsidiary construct in the anisotropy-based control theory; see, for example,   \cite[Lemmas 2, 5]{VKS_1996}}
\begin{equation}
\label{H4}
    \|F\|_4
     :=
    \sqrt[4]{
    \frac{1}{2\pi}
    \int_{\mR}
    \|S(\omega)\|_\rF^2
    \rd \omega}
    =
    \sqrt[4]{
    \int_{\mR}
    \|K(t)\|_\rF^2
    \rd t
    }
    =
    \sqrt[4]{
    2
    \int_{\mR_+}
    \|K(t)\|_\rF^2
    \rd t
    }.
\end{equation}
The integrand $\|S(\omega)\|_\rF^2 = \Tr ((\wh{F}(\omega) \wh{F}(\omega)^*)^2)$ is the
fourth power of the Schatten 4-norm \cite[p.~441]{HJ_2007} of the
matrix $\wh{F}(\omega)$;  see also \cite{S_2005}.    In (\ref{H4}), use is made of the Plancherel identity, which is  applied to (\ref{S}) in combination with (\ref{EZZ}) and the invariance of the Frobenius norm under the matrix transpose and complex conjugation, whereby $\|K(t)\|_\rF = \|K(-t)\|_\rF$ for any $t \in \mR$.
The transfer functions $F$ with
$\|F\|_4<+\infty$ form a normed space $\cH_4^{p\x m}$.

The quadratic cost $\|F\|_2^2$, its ``quartic'' counterpart $\|F\|_4^4$,    and the risk-sensitive cost (\ref{QE}) are particular cases of a ``covariance-analytic'' functional
\begin{equation}
\label{phiFF}
    J_\phi(F)
    :=
    \frac{1}{2\pi}
  \int_{\mR}
  \Tr \phi(S(\omega))
  \rd \omega,
\end{equation}
specified by a function $\phi$ of a complex variable,  which is analytic in a neighbourhood of the interval  $[0, \|F\|_\infty^2]$ in the complex plane, takes real values on this interval, satisfies
\begin{equation}
\label{phi0}
  \phi(0) = 0
\end{equation}
and is evaluated \cite{H_2008} at the matrix (\ref{S}).
In the case of absolutely integrable impulse responses $f$,    the condition (\ref{phi0}) is necessary for the convergence of the integral in (\ref{phiFF}) since,  in that case,  $\lim_{\omega\to \infty}S(\omega) = 0$ in view of the Riemann–Lebesgue Lemma applied to (\ref{Fhat}), so that $\lim_{\omega\to \infty}\phi(S(\omega)) = \phi(0)I_p$.
For example, the risk-sensitive cost   (\ref{QE}) corresponds to (\ref{phiFF}) with the function
\begin{equation}
\label{QEphi}
    \phi_\theta(z)
    :=
    -\frac{1}{2}\ln(1-\theta z)
    =
    \frac{1}{2}
    \sum_{k =1}^{+\infty}
    \frac{1}{k}
    (\theta z)^k
    =
    \phi_1(\theta z),
\end{equation}
which satisfies (\ref{phi0}) and is analytic in the open disc $\{z \in \mC: |z|< \frac{1}{\theta}\}$ containing the interval $[0, \|F\|_\infty^2]$ in view of (\ref{tgood}).  This correspondence follows from the  identity $\ln\det M = \Tr \ln M$ for nonsingular matrices,  applied to the integrand in (\ref{QE}). Alternatively, $J_{\phi_\theta}(F)$  can be evaluated  by applying (\ref{phiFF}) to a rescaled transfer function as $J_{\phi_1}(\sqrt{\theta} F)$, where $\phi_1$ is given by (\ref{QEphi}) with $\theta = 1$.

For a wider class of spectral densities $S$  with a finite (and not necessarily zero) limit $S(\infty):= \lim_{\omega \to \infty} S(\omega)$, a necessary condition for convergence  of the integral in (\ref{phiFF}), similar to (\ref{phi0}), is provided by
\begin{equation}
\label{phiS}
  \Tr \phi(S(\infty)) = 0.
\end{equation}
It is also possible to incorporate a frequency-dependent weight $u\in L^1(\mR, \mR_+)$  in (\ref{phiFF}) by considering the quantity
$$
        \frac{1}{2\pi}
  \int_{\mR}
        u(\omega)
  \Tr \phi(S(\omega))
  \rd \omega,
$$
which extends such functionals to transfer functions $F \in \cH_\infty^{p\x m}$ with a bounded spectral density $S$, not necessarily vanishing at infinity, and makes the condition    (\ref{phiS}) redundant.   However, for simplicity,   we leave this extension aside and will  only  discuss the costs (\ref{phiFF}).

\section{Hardy-Schatten norms and output energy  cumulants}
\label{sec:cum}

In accordance with (\ref{QEphi}), the Taylor series expansion of the risk-sensitive cost (\ref{QE}) can be  represented as
\begin{equation}
\label{XiF}
    \Xi(\theta)
    =
    \frac{1}{4\pi}
    \sum_{k =1}^{+\infty}
    \frac{1}{k}
    \theta^k
    \int_\mR
    \Tr
    (S(\omega)^k)
    \rd \omega
    =
    \frac{1}{2}
    \sum_{k =1}^{+\infty}
    \frac{1}{k}
    \theta^k
    \|F\|_{2k}^{2k},
\end{equation}
so that
\begin{equation}
\label{FXi}
    \|F\|_{2k}^{2k}
    =
    \frac{2}{(k-1)!}\Xi^{(k)}(0),
    \qquad
    k \in \mN
\end{equation}
 (with $\mN$ the set of positive integers). Here, use is made of the Hardy-Schatten norms of the transfer function $F$ defined in terms of the output spectral density $S$ from (\ref{S}) by
\begin{equation}
\label{H2k}
    \|F\|_{2k}
     :=
    \sqrt[2k]{
    \frac{1}{2\pi}
    \int_{\mR}
    \Tr(S(\omega)^k)
    \rd \omega}
\end{equation}
on the corresponding
spaces $\cH_{2k}^{p\x m}$. The integrand in (\ref{H2k}) is the $2k$th power of the Schatten $2k$-norm \cite[p.~441]{HJ_2007}
$$
    \sqrt[2k]{\Tr(S(\omega)^k)}
    =
    \sqrt[2k]{\Tr ((\wh{F}(\omega) \wh{F}(\omega)^*)^k)}
$$
of the matrix
$\wh{F}(\omega)$. The $\cH_2$
and $\cH_4$-norms of the system in (\ref{H2}), (\ref{H4}) are particular cases of (\ref{H2k}) with  $k=1,2$.
 The representation (\ref{XiF}) can also be viewed as a series over the powers $\|F\|_{2k}^{2k}$ of the Hardy-Schatten norms,  with the coefficients $\frac{1}{2k}\theta^k$ specified by a single risk sensitivity parameter $\theta$.

Returning from the risk-sensitive example to the general case, the Taylor series expansion of the function $\phi$ in (\ref{phiFF}) allows the covariance-analytic cost to be represented as an infinite\footnote{unless $\phi$ is a polynomial} linear combination
\begin{equation}
\label{JHS}
    J_\phi(F)
    =
    \sum_{k=1}^{+\infty}
    \phi_k
    \|F\|_{2k}^{2k},
    \qquad
    \phi_k :=     \frac{1}{k!}
    \phi^{(k)}(0)
\end{equation}
of the appropriately powered Hardy-Schatten norms (\ref{H2k}), with the coefficients
$\phi_k \in \mR$  specified by $\phi$ as a ``cost-shaping'' function.

We will now discuss several properties of the $\cH_{2k}$-norms in regard to their probabilistic origin from the output energy (\ref{ET}) of the system.
In what follows, the trivial case of identically zero transfer functions $F$ (with $\|F\|_\infty= 0$) is excluded from consideration.

\begin{lemma}
\label{lem:norms}
The Hardy-Schatten norms (\ref{H2k}) are related to the $\cH_2$- and $\cH_\infty$-norms in (\ref{H2}), (\ref{Hinf}) by the inequality
\begin{equation}
\label{HHH}
  \|F\|_{2k}
  \<
  \|F\|_\infty
  \sqrt[k]{\|F\|_2/\|F\|_\infty},
  \qquad
  k \in \mN.
\end{equation}
\end{lemma}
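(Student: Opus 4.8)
The plan is to reduce the claim to a pointwise spectral inequality for the output spectral density $S(\omega)$ from (\ref{S}), which is positive semi-definite at every frequency. Raising both sides of (\ref{HHH}) to the power $2k$, I would rewrite the right-hand side as $\|F\|_\infty^{2(k-1)}\|F\|_2^2$, so that (\ref{HHH}) becomes equivalent, via the definitions (\ref{H2}), (\ref{Hinf}), (\ref{H2k}) of the three norms, to
\begin{equation*}
  \frac{1}{2\pi}\int_{\mR}\Tr(S(\omega)^k)\,\rd\omega
  \<
  \|F\|_\infty^{2(k-1)}\,
  \frac{1}{2\pi}\int_{\mR}\Tr(S(\omega))\,\rd\omega.
\end{equation*}
Hence it suffices to establish the pointwise bound $\Tr(S(\omega)^k)\<\|F\|_\infty^{2(k-1)}\Tr(S(\omega))$ for almost every $\omega\in\mR$ and then integrate.

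For the pointwise step, I would fix a frequency $\omega$ for which $\lambda_{\max}(S(\omega))\<\|F\|_\infty^2$; by the definition (\ref{Hinf}) of the $\cH_\infty$-norm as an essential supremum, this holds for almost all $\omega$. Since $S(\omega)\in\mH_p^+$, let $\lambda_1,\dots,\lambda_p\>0$ denote its eigenvalues, so that $\Tr(S(\omega)^k)=\sum_{j=1}^p \lambda_j^k$ and $\Tr(S(\omega))=\sum_{j=1}^p\lambda_j$. Each eigenvalue satisfies $\lambda_j\<\lambda_{\max}(S(\omega))\<\|F\|_\infty^2$, whence $\lambda_j^k=\lambda_j\,\lambda_j^{k-1}\<\|F\|_\infty^{2(k-1)}\lambda_j$. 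Summing over $j$ yields the desired pointwise inequality.

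Integrating this bound over $\mR$ and dividing by $2\pi$ gives $\|F\|_{2k}^{2k}\<\|F\|_\infty^{2(k-1)}\|F\|_2^2$, and taking the $2k$-th root returns (\ref{HHH}) once I verify that $\|F\|_\infty^{(k-1)/k}\|F\|_2^{1/k}$ coincides with the stated $\|F\|_\infty\sqrt[k]{\|F\|_2/\|F\|_\infty}$. The exclusion of the trivial case $\|F\|_\infty=0$ noted just before the lemma keeps the latter expression well defined; if $\|F\|_2=+\infty$ the inequality holds vacuously, and for $k=1$ it collapses to the equality $\|F\|_2=\|F\|_2$. I do not anticipate a serious obstacle, since the whole argument rests on the elementary scalar estimate $\lambda^k\<\Lambda^{k-1}\lambda$ for $0\<\lambda\<\Lambda$, applied to the eigenvalues of $S(\omega)$. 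The only points requiring care are the almost-everywhere application of the essential supremum in (\ref{Hinf}) and the bookkeeping of the fractional exponents in passing between the powered form $\|F\|_{2k}^{2k}\<\|F\|_\infty^{2(k-1)}\|F\|_2^2$ and the normalised form (\ref{HHH}).
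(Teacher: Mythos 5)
Your proof is correct and follows essentially the same route as the paper: both rest on the almost-everywhere bound $S(\omega) \preccurlyeq \|F\|_\infty^2 I_p$ and on integrating a pointwise trace estimate to obtain $\|F\|_{2k}^{2k} \< \|F\|_\infty^{2k-2}\|F\|_2^2$. The only cosmetic difference is that you diagonalise $S(\omega)$ and apply the scalar inequality $\lambda^k \< \Lambda^{k-1}\lambda$ eigenvalue-by-eigenvalue, whereas the paper iterates the matrix inequality $\Tr(S(\omega)^{k+1}) \< \|F\|_\infty^2 \Tr(S(\omega)^k)$ and then inducts on $k$ after integration.
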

\begin{proof}
At any frequency $\omega\in \mR$,  the output spectral density (\ref{S}) is a positive semi-definite Hermitian matrix, which  satisfies
$S(\omega) \preccurlyeq \|F\|_\infty^2 I_p$ almost everywhere in view of (\ref{Hinf}), and hence,
\begin{equation}
\label{SSS}
    \Tr (S(\omega)^{k+1})
    =
    \Tr (S(\omega)^{k/2} S(\omega)S(\omega)^{k/2})
    \<
    \|F\|_\infty^2 \Tr (S(\omega)^k)
\end{equation}
for any $k \in \mN$. By integrating both sides of (\ref{SSS}) over $\omega$ and using (\ref{H2k}), it follows that
$$
    \|F\|_{2k+2}^{2k+2}
    =
    \frac{1}{2\pi}
    \int_{\mR}
    \Tr(S(\omega)^{k+1})
    \rd \omega
    \<
    \|F\|_\infty^2
    \|F\|_{2k}^{2k},
$$
which,  by induction on $k$,  leads to
$$
    \|F\|_{2k}^{2k}
    \<
    \|F\|_\infty^{2k-2}
    \|F\|_2^2
    =
    \|F\|_\infty^{2k}
    (\|F\|_2/\|F\|_\infty)^2,
$$
thus establishing (\ref{HHH}).
\end{proof}

By (\ref{HHH}), the finiteness of the $\cH_2$ and $\cH_\infty$-norms ensures  that all the Hardy-Schatten norms (\ref{H2k})  are also finite, and hence,
\begin{equation}
\label{HHH1}
    \cH_2^{p\x m}
    \bigcap
    \cH_\infty^{p\x m}
    \subset
    \bigcap_{k\in \mN} \cH_{2k}^{p\x m}.
\end{equation}
Another corollary of (\ref{HHH}) is that
$$
    \limsup_{k\to +\infty}
    \|F\|_{2k}
    \<
    \|F\|_\infty
    \lim_{k\to +\infty}
    \sqrt[k]{\|F\|_2/\|F\|_\infty}
    =
    \|F\|_\infty.
$$
Similarly to (\ref{QE}), these norms govern the infinite-horizon asymptotic behaviour of the cumulants
\begin{equation}
\label{bCkT}
    C_{k,T}:= \d_v^k \ln \bE \re^{v\cE_T}\big|_{v=0},
    \qquad
    k \in \mN,
\end{equation}
of the output energy $\cE_T$ in (\ref{ET}) as discussed below. These cumulants are related to the corresponding moments as
\begin{equation}
\label{CPi}
     C_{k,T} = \Pi_k(\bE \cE_T, \ldots, \bE (\cE_T^k)),
     \qquad
     k \in \mN,
\end{equation}
through
$k$-variate polynomials
\begin{equation}
\label{Pi}
    \Pi_k(\mu_1, \ldots, \mu_k)
    :=
    \sum_{\ell_1, \ldots, \ell_k\> 0:\ \sum_{j=1}^k j\ell_j = k}
    \pi_{k; \ell_1, \ldots, \ell_k}
    \prod_{j=1}^k
    \mu_j^{\ell_j}
\end{equation}
(with real coefficients $\pi_{k; \ell_1, \ldots, \ell_k}$, independent of the probability distribution of $\cE_T$), which are homogeneous in the sense that
$$
    \Pi_k(\sigma \mu_1,\sigma^2 \mu_2, \ldots, \sigma^k \mu_k) = \sigma^k \Pi_k(\mu_1, \ldots, \mu_k),
    \qquad
    \sigma \in \mR,
$$
as illustrated by the first three polynomials:
$$
    \Pi_1(\mu_1)
     =
    \mu_1,
    \quad
    \Pi_2(\mu_1, \mu_2)
     =
    \mu_2 - \mu_1^2,
    \quad
    \Pi_3(\mu_1, \mu_2, \mu_3)
     =
    \mu_3 - 3\mu_1\mu_2 + 2\mu_1^3,
$$
where $\Pi_1$, $\Pi_2$ yield the mean and variance, respectively.
The following theorem, whose results were presented in a slightly different form in \cite{VP_2010b} and a quantum-mechanical version was obtained in \cite[Theorem 4]{VPJ_2018a}, is closely related to the Szeg\H{o} limit theorems for the  spectra of Toeplitz operators
\cite{GS_1958} and is given here for completeness.

\begin{theorem}
\label{th:CK}
Suppose the system (\ref{ZfW}) has finite $\cH_2$- and $\cH_\infty$-norms (\ref{H2}), (\ref{Hinf}), so that its transfer function (\ref{Ff}) satisfies
\begin{equation}
\label{Fgood1}
  F \in     \cH_2^{p\x m}
    \bigcap
    \cH_\infty^{p\x m}.
\end{equation}
Then the cumulants (\ref{bCkT}) of the output energy (\ref{ET}) have the following asymptotic growth rates
\begin{equation}
\label{bCasy}
  \lim_{T\to +\infty}
  \Big(
    \frac{1}{T}
    C_{k, T}
  \Big)
  =
  (2k-2)!!
  \|F\|_{2k}^{2k},
  \qquad
  k \in \mN,
\end{equation}
in terms of the Hardy-Schatten norms (\ref{H2k}). \hfill$\square$
\end{theorem}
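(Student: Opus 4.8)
The plan is to recognise the output energy $\cE_T$ in (\ref{ET}) as a quadratic functional of the Gaussian process $Z$ over $[0,T]$ and to reduce the computation of its cumulants to traces of powers of the associated covariance operator. Regarding $Z|_{[0,T]}$ as a centred Gaussian element of the Hilbert space $L^2([0,T], \mR^p)$, I would write $\cE_T = \bra Z, Z\ket_{L^2[0,T]}$, so that the relevant object is the truncated convolution (Toeplitz) operator $\cK_T$ with matrix kernel $K(s-t)$ from (\ref{EZZ}) and symbol given by the spectral density $S$ in (\ref{S}). Under the hypothesis (\ref{Fgood1}), this operator is positive, bounded with $\|\cK_T\| \< \|F\|_\infty^2$ (the operator norm of a Toeplitz operator being dominated by the essential supremum of its symbol, cf.\ (\ref{Hinf})), and trace class with $\Tr \cK_T = T\, \Tr K(0) = T\|F\|_2^2 < +\infty$ by (\ref{H2}).

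First I would compute the cumulant-generating function of $\cE_T$ through the standard Gaussian quadratic-form identity. Through a spectral diagonalisation of $\cK_T$, the variable $\cE_T$ is a weighted sum of independent $\chi^2_1$ terms, whence for $|v| < \frac{1}{2}\|F\|_\infty^{-2}$,
\begin{equation*}
    \ln \bE \re^{v\cE_T}
    =
    -\frac{1}{2}\ln\det(I - 2v\cK_T)
    =
    -\frac{1}{2}\Tr\ln(I - 2v\cK_T)
    =
    \sum_{k=1}^{+\infty}
    \frac{2^{k-1}}{k}\, v^k\, \Tr(\cK_T^k),
\end{equation*}
where the Fredholm determinant is well defined since $\cK_T$ is trace class, and the last equality expands $-\ln(1-x)$ in a series convergent for $2|v|\,\|\cK_T\| < 1$. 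Since $C_{k,T}$ in (\ref{bCkT}) is $k!$ times the coefficient of $v^k$ in this expansion, I obtain $C_{k,T} = 2^{k-1}(k-1)!\,\Tr(\cK_T^k)$ for every $k \in \mN$.

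It then remains to determine the growth rate of $\Tr(\cK_T^k)$. Writing this trace as the $k$-fold integral of $\Tr(K(t_1-t_2)\cdots K(t_k-t_1))$ over $[0,T]^k$ and passing to difference variables, the integrand depends only on $k-1$ increments; integrating the remaining free variable contributes a factor asymptotically equal to $T$, while the integral over the increments converges, as $T\to +\infty$, to its value over all of $\mR^{k-1}$. By the Fourier convolution theorem and Parseval's identity (with $S$ the Fourier transform of $K$, by (\ref{S})), this limiting integral equals $\frac{1}{2\pi}\int_\mR \Tr(S(\omega)^k)\rd\omega = \|F\|_{2k}^{2k}$ by (\ref{H2k}); this is precisely the continuous-time, matrix-valued form of the Szeg\H{o} limit theorem \cite{GS_1958}. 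Hence $\frac{1}{T}\Tr(\cK_T^k) \to \|F\|_{2k}^{2k}$, and combining this with the identity $(2k-2)!! = 2^{k-1}(k-1)!$ yields (\ref{bCasy}).

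The main obstacle is this last step: making the Szeg\H{o} trace asymptotic rigorous in the present setting, i.e.\ showing that the boundary (edge) contributions from the truncation to $[0,T]$ are $o(T)$ uniformly in $k$. Here the two standing assumptions enter decisively---the $\cH_\infty$-bound controls the operator norm (hence the domain of analyticity in $v$ and the uniform boundedness of $\cK_T$), while the $\cH_2$-finiteness and the consequent finiteness of all $\|F\|_{2k}$ from Lemma~\ref{lem:norms} guarantee that $\Tr(S^k)$ is integrable, so that the limiting increment integral converges absolutely and the $O(1)$ edge corrections are indeed negligible against the linear-in-$T$ bulk term.
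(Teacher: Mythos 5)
Your proposal is correct and follows essentially the same route as the paper's proof: the truncated covariance operator $K_T$, the Fredholm-determinant (Gaussian quadratic form) expansion of the cumulant-generating function yielding $C_{k,T}=(2k-2)!!\,\Tr(K_T^k)$, and the Szeg\H{o}-type trace asymptotics $\frac{1}{T}\Tr(K_T^k)\to\|F\|_{2k}^{2k}$. The one step you flag as the main obstacle---rigorously controlling the $o(T)$ edge contributions in the trace asymptotics---is exactly the step the paper does not argue directly either, but outsources to \cite[Lemma 6 in Appendix C]{VPJ_2018a}.
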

\begin{proof}
In view of the inclusion (\ref{HHH1}), the fulfillment of the condition (\ref{Fgood1}) also ensures that
\begin{equation}
\label{Fgood}
    F\in \bigcap_{k\in \mN} \cH_{2k}^{p\x m}.
\end{equation}
Now, for a fixed but otherwise arbitrary $T>0$, consider a compact positive semi-definite self-adjoint  operator $K_T$ with the continuous covariance kernel (\ref{EZZ}) acting on a function $\psi\in L^2([0,T],\mC^p)$ as
\begin{equation}
\label{KT}
    K_T(\psi)(s)
    :=
    \int_{0}^{T}
    K(s-t)\psi(t)
    \rd
    t,
    \qquad
    0 \< s \< T.
\end{equation}
In view of the Fredholm determinant
formula \cite[Theorem~3.10 on p.~36]{S_2005} (see also \cite{Ginovian_1994} and references therein),
\begin{align}
\nonumber
    \ln \bE
    \re^{\frac{\theta}{2}\cE_T}
     & =
    -
    \frac{1}{2}
    \Tr
    \ln(
        \cI-\theta K_T
    )\\
\label{expquad}
     & =
     -\frac{1}{2}
     \sum_{j\in \mN}
     \ln (1-\theta \lambda_{j,T})
     =
        \frac{1}{2}
        \sum_{k\in \mN}
        \frac{1}{k}\theta^k
        \Tr(K_T^k)
\end{align}
for any
\begin{equation}
\label{tgoodT}
    \theta < 1/\br(K_T),
\end{equation}
where $\lambda_{j,T}\> 0$ are the eigenvalues  of the operator $K_T$, with $\br(K_T) =  \max_{j\in \mN}\lambda_{j,T}$ its spectral radius, and $\cI$ is the identity operator on $L^2([0,T], \mC^p)$. Note that (\ref{tgood}) implies (\ref{tgoodT}) in view of the upper bound \cite{GS_1958}
\begin{equation}
\label{KTnorm}
    \max_{j\in \mN}\lambda_{j,T}
    =
    \|K_T\| \< \|F\|_\infty^2,
    \qquad
    T >0,
\end{equation}
for the induced operator norm of $K_T$ on the Hilbert space $L^2([0,T], \mC^p)$.
The spectrum of $K_T$ satisfies
\begin{equation}
\label{Ktrace}
    \sum_{j\in \mN} \lambda_{j,T} = \Tr K_T = \int_0^T \Tr K(0)\rd t = T\Tr K(0) = T \|F\|_2^2
\end{equation}
due to the Toeplitz structure of the covariance kernel \cite{B_1988,RS_1980} and in accordance with (\ref{H2}), thus securing the convergence of the series in (\ref{expquad}).
Moreover, for any $k \in \mN$, the trace of the $k$-fold iterate
of the operator $K_T$ in (\ref{KT}) is given by a convolution-like integral
\begin{align}
\nonumber
    \Tr(K_T^k)
    & =
    \sum_{j\in \mN}\lambda_{j,T}^k\\
\nonumber
    & = \int_{[0,T]^k}
    \Tr
    (
        K(t_1-t_2)
        K(t_2-t_3)
        \x
        \ldots
        \x
        K(t_{k-1}-t_k)
        K(t_k-t_1)
    )
    \rd t_1
    \x \ldots \x
    \rd t_k,
\end{align}
whose asymptotic behaviour is described by
\begin{equation}
\label{KTasy}
  \lim_{T\to +\infty}
  \Big(
    \frac{1}{T}
    \Tr (K_T^k)
  \Big)
  =
  \frac{1}{2\pi}
  \int_\mR
  \Tr (S(\omega)^k)
  \rd \omega
  =
  \|F\|_{2k}^{2k},
\end{equation}
in view of (\ref{S}), (\ref{H2k}), (\ref{Fgood}) and \cite[Lemma 6 in Appendix C]{VPJ_2018a}. On the other hand, the left-hand side of (\ref{expquad}) is the
cumulant-generating function of $\cE_T$ evaluated at $\frac{\theta}{2}$ and, therefore, related to the cumulants (\ref{bCkT}) as
\begin{equation}
\label{expquad1}
    \ln \bE\re^{\frac{\theta}{2}\cE_T}
    =
    \sum_{k\in \mN }
    \frac{1}{k!}
    (\theta/2)^k
    C_{k,T},
\end{equation}
whereby
\begin{equation}
\label{bCK}
    C_{k,T}
    =
    (k-1)!2^{k-1}
    \Tr(K_T^k)
    =
    (2k-2)!!
    \Tr(K_T^k),
    \qquad
    k \in \mN
\end{equation}
(the latter is obtained by comparing the right-hand sides of (\ref{expquad}), (\ref{expquad1})).
A combination of (\ref{KTasy}) with (\ref{bCK}) establishes (\ref{bCasy}). 
\end{proof}

The relation (\ref{bCasy}) clarifies the probabilistic meaning of the Hardy-Schatten norms
of the transfer function in the context of the asymptotic behaviour of the output energy cumulants.  As shown below, the covariance-analytic cost (\ref{phiFF}) plays a similar role for the following functional of the covariance operator (\ref{KT}):
\begin{equation}
\label{TrphiKT}
    \Tr \phi(K_T)
    =
    \sum_{k=1}^{+\infty}
    \phi_k
    \Tr (K_T^k)
    =
    \sum_{k=1}^{+\infty}
    \frac{1}{(2k-2)!!}
    \phi_k
    C_{k, T},
\end{equation}
where the last equality uses (\ref{bCK}). Such ``trace-analytic'' functionals arise in quantum mechanics \cite{L_1973} and are also used in a control theoretic context (in application to matrices rather than integral operators); see, for example,  \cite{SIG_1998} and \cite[Section X]{VP_2010a}. An equivalent representation of (\ref{TrphiKT}) in terms of the resolvent of the operator $K_T$  is given by
$$    \Tr \phi(K_T)
    =
    \frac{1}{2\pi i}
    \Tr
    \oint
    \phi(z)
    (z\cI - K_T)^{-1}
    \rd z,
$$
where the integral is over any  counterclockwise  oriented closed contour in the analyticity domain of $\phi$ around the interval $[0, \|F\|_{\infty}^2]$ containing the spectrum of $K_T$. In view of (\ref{CPi}), (\ref{Pi}),  the quantity (\ref{TrphiKT}) is a complicated function  of the output energy moments:
$$
    \Tr \phi(K_T)
    =
    \sum_{k=1}^{+\infty}
    \frac{1}{(2k-2)!!}
    \phi_k
    \sum_{\ell_1, \ldots, \ell_k\> 0:\ \sum_{j=1}^k j\ell_j = k}
    \pi_{k; \ell_1, \ldots, \ell_k}
    \prod_{j=1}^k
    (\bE (\cE_T^j))^{\ell_j}.
$$

\begin{theorem}
\label{th:covan}
Suppose the assumptions of Theorem~\ref{th:CK} are satisfied.
Then for any function $\phi$, analytic in a neighbourhood of $[0,\|F\|_\infty^2]$ and satisfying (\ref{phi0}), the growth rate of the functional (\ref{TrphiKT}) coincides with (\ref{phiFF}):
\begin{equation}
\label{phiKTasy}
    \lim_{T\to +\infty}
    \Big(
    \frac{1}{T}
    \Tr \phi(K_T)
    \Big)
    =
    J_\phi(F).
\end{equation}
\hfill$\square$
\end{theorem}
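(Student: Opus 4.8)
The plan is to combine the termwise Szeg\H{o}-type limits already established in the proof of Theorem~\ref{th:CK} with a $T$-uniform, summable domination that licenses exchanging the limit $T\to+\infty$ with the infinite sum. Starting from the power-series representation (\ref{TrphiKT}) of $\Tr\phi(K_T)$ and the analogous expansion (\ref{JHS}) of $J_\phi(F)$ — both of which start at $k=1$ precisely because the normalisation (\ref{phi0}) kills the $k=0$ term that would otherwise contribute an ill-defined trace of the identity — the assertion (\ref{phiKTasy}) reduces to the interchange
\[
    \lim_{T\to +\infty}
    \sum_{k=1}^{+\infty}
    \phi_k
    \frac{1}{T}\Tr(K_T^k)
    =
    \sum_{k=1}^{+\infty}
    \phi_k
    \lim_{T\to +\infty}
    \frac{1}{T}\Tr(K_T^k),
\]
since the inner limits equal $\|F\|_{2k}^{2k}$ by (\ref{KTasy}), whose sum against $\phi_k$ is $J_\phi(F)$. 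Thus the whole theorem hinges on justifying this exchange; the pointwise convergence of each summand is already supplied by (\ref{KTasy}).

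The key step I would carry out is a $T$-uniform domination of the summands. Writing $\lambda_{j,T}\geqslant 0$ for the eigenvalues of the positive semi-definite operator $K_T$ and using $\Tr(K_T^k)=\sum_{j\in\mN}\lambda_{j,T}^k$, the operator-norm bound $\max_j\lambda_{j,T}=\|K_T\|\leqslant\|F\|_\infty^2$ from (\ref{KTnorm}) together with the trace identity $\sum_j\lambda_{j,T}=\Tr K_T=T\|F\|_2^2$ from (\ref{Ktrace}) gives
\[
    \frac{1}{T}\Tr(K_T^k)
    =
    \frac{1}{T}\sum_{j\in\mN}\lambda_{j,T}^k
    \leqslant
    \frac{1}{T}\Big(\max_{j}\lambda_{j,T}\Big)^{k-1}\sum_{j}\lambda_{j,T}
    \leqslant
    \|F\|_\infty^{2k-2}\|F\|_2^2,
\]
uniformly in $T>0$. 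This is exactly the bound obeyed by the limits $\|F\|_{2k}^{2k}$ themselves through (\ref{HHH}), so the single dominating sequence $M_k:=|\phi_k|\,\|F\|_\infty^{2k-2}\|F\|_2^2$ controls the prelimit summands and their limits simultaneously.

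It then remains to check $\sum_{k}M_k<+\infty$, which is where the analyticity of $\phi$ enters. Since the Taylor series of $\phi$ about the origin underlying (\ref{TrphiKT}) and (\ref{JHS}) converges on a disc strictly containing $[0,\|F\|_\infty^2]$, its coefficients satisfy $|\phi_k|\leqslant Cr^{-k}$ for some $r>\|F\|_\infty^2$, whence $\sum_k|\phi_k|\|F\|_\infty^{2k}$ converges geometrically and $\sum_kM_k<+\infty$ (using $\|F\|_\infty>0$, as the trivial transfer function is excluded). With the termwise limits from (\ref{KTasy}) and this uniform, summable domination in hand, the dominated convergence theorem for series (Tannery's theorem) validates the interchange displayed above, yielding $\lim_{T\to+\infty}\frac1T\Tr\phi(K_T)=\sum_{k=1}^{+\infty}\phi_k\|F\|_{2k}^{2k}=J_\phi(F)$, which is (\ref{phiKTasy}). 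The main obstacle is precisely this interchange: the individual limits (\ref{KTasy}) are already known, so the real content is the uniform-in-$T$ control of the tail of the $k$-series, for which the eigenvalue bound (\ref{KTnorm}) and the trace identity (\ref{Ktrace}) are the decisive inputs, while analyticity of $\phi$ secures absolute summability of the weighting coefficients.
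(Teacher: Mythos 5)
Your proposal is correct and follows essentially the same route as the paper: termwise limits from (\ref{KTasy}), the uniform bound $\frac{1}{T}\Tr(K_T^k)\leqslant\|F\|_\infty^{2k-2}\|F\|_2^2$ obtained from (\ref{KTnorm}) and (\ref{Ktrace}), summability of $|\phi_k|\,\|F\|_\infty^{2k}$ from analyticity, and a dominated-convergence (Tannery) interchange of limit and sum. The only cosmetic difference is that you derive the uniform bound directly from the eigenvalue expansion $\Tr(K_T^k)=\sum_j\lambda_{j,T}^k$, whereas the paper obtains it by induction from the trace inequality $\Tr(K_T^{k+1})\leqslant\|F\|_\infty^2\Tr(K_T^k)$; the two arguments are equivalent.
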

\begin{proof}
Since the function $\phi$ is analytic in a neighbourhood of $[0,\|F\|_\infty^2]$, then the radius of convergence of its Taylor series at the origin is strictly greater than $\|F\|_\infty^2$, and hence,
\begin{equation}
\label{major}
    \sum_{k \in \mN}
    |\phi_k|
    \|F\|_\infty^{2k}
    < +\infty.
\end{equation}
Similarly to (\ref{SSS}), the inequality (\ref{KTnorm}) and the positive semi-definiteness of the operator $K_T$ lead to
\begin{equation}
\label{KKK}
    \Tr (K_T^{k+1})
    =
    \Tr (K_T^{k/2} K_T K_T^{k/2})
    \<
    \|F\|_\infty^2 \Tr (K_T^k)
\end{equation}
for any $k \in \mN$. Therefore, by induction on $k$, combined with (\ref{Ktrace}), it follows from (\ref{KKK}) that
\begin{equation}
\label{Ktrace1}
    \Tr(K_T^k)
    \< \|F\|_\infty^{2k-2} \Tr K_T = T\|F\|_\infty^{2k-2} \|F\|_2^2,
    \qquad
    k \in \mN.
\end{equation}
A combination of (\ref{KTasy}), (\ref{major}), (\ref{Ktrace1}) along with a dominated convergence argument (using the upper bound $|\phi_k \Tr (K_T^k)/T| \< (\|F\|_2/\|F\|_\infty)^2|\phi_k|  \|F\|_\infty^{2k}$ which holds for all $k\in \mN$ uniformly over $T>0$) yields
$$    \frac{1}{T}
    \Tr \phi(K_T)
    =
    \sum_{k=1}^{+\infty}
    \phi_k
    \frac{1}{T}
    \Tr (K_T^k)
    \to
    \sum_{k=1}^{+\infty}
    \phi_k
    \|F\|_{2k}^{2k},
$$
as $T\to +\infty$, thus establishing (\ref{phiKTasy}) in view of (\ref{JHS}).
\end{proof}

\section{A variational inequality with covariance-analytic functionals}
\label{sec:var}

Suppose the input $W$ of the system (\ref{XZ}) is a Gaussian process in $\mR^m$ with stationary increments (not necessarily independent as in the standard Wiener process case),  whose covariance structure  is specified by
\begin{equation}
\label{Wcov}
    \bE
    \Big(
        \int_\mR g(s)^\rT \rd W(s)
        \int_\mR h(t)^\rT \rd W(t)
    \Big)
    =
    \bra
        g,
        D(h)
    \ket,
\end{equation}
where $D$ is a positive semi-definite  self-adjoint covariance operator on the Hilbert space $L^2(\mR, \mR^m)$. 
The operator $D$ is shift-invariant in the sense of the commutativity $D\cT_\tau = \cT_\tau D$ with the translation operator $\cT_\tau$ acting on a function $\psi : \mR\to \mR^m$ as $(\cT_\tau \psi)(t)= \psi(t+\tau)$ for any $t, \tau \in \mR$. Also, suppose the covariance operator $D$ has a spectral density $\Sigma: \mR\to \mH_m^+$ which relates (\ref{Wcov}) to the Fourier transforms $\wh{g}$, $\wh{h}$ of the functions $g$, $h$ as
\begin{equation}
\label{gDh}
        \bra
        g,
        D(h)
    \ket
    =
    \frac{1}{2\pi}
    \int_{\mR}
    \wh{g}(\omega)^*
    \Sigma(\omega)
    \wh{h}(\omega)
    \rd \omega .
\end{equation}
For what follows, it is assumed that the input spectral density $\Sigma$ admits the factorisation
\begin{equation}
\label{GG}
  \Sigma(\omega) = \Gamma(i\omega) \Gamma(i\omega)^*,
  \qquad
  \omega \in \mR,
\end{equation}
where $\Gamma \in \cH_\infty^{m\x m}$ is the transfer function of a shaping filter 
which produces $W$ from a standard Wiener process $V:=(V(t))_{t \in \mR}$
in $\mR^m$
according to an appropriately modified form of (\ref{ZfW}); see Fig.~\ref{fig:FG}.
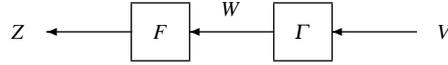
\begin{figure}[htbp]
\unitlength=0.75mm
\linethickness{0.5pt}
\centering
\begin{picture}(100,15.00)
    \put(35,2){\framebox(10,10)[cc]{{$F$}}}
    \put(60,2){\framebox(10,10)[cc]{{$\Gamma$}}}
    \put(35,7){\vector(-1,0){15}}
    \put(60,7){\vector(-1,0){15}}
    \put(85,7){\vector(-1,0){15}}
    \put(15,7){\makebox(0,0)[cc]{$Z$}}
    \put(90,7){\makebox(0,0)[cc]{$V$}}
    \put(52.5,11){\makebox(0,0)[cc]{$W$}}
\end{picture}
\caption{The linear stochastic system $F$ with the output $Z$ and input $W$ produced by a shaping filter $\Gamma$ from a standard Wiener process $V$.}
\label{fig:FG}
\end{figure}
In particular, if $W$ is a standard Wiener process in $\mR^m$, then it has the identity covariance operator $D=\cI$ with the constant spectral density $\Sigma(\omega) = I_m$, in which case the shaping filter is an all-pass system, with the matrix $\Gamma(i\omega)$ being unitary  for almost all $\omega \in \mR$.  Returning to the more general case in (\ref{Wcov}), (\ref{gDh}),  the output spectral density (\ref{S}) is modified as
\begin{equation*}
\label{S1}
    S(\omega)
    :=
    \wh{F}(\omega) \Sigma(\omega)\wh{F}(\omega)^*,
\end{equation*}
thus allowing the variance of the stationary output process $Z$ in (\ref{ZfW}) to be represented in this case as
\begin{equation}
\label{Zvar}
    \bE(|Z(t)|^2)
     =
     \|F\Gamma\|_2^2
     =
    \frac{1}{2\pi}
    \int_{\mR}
    \Tr S(\omega)
    \rd \omega
    =
    \frac{1}{2\pi}
    \int_{\mR}
    \bra \Lambda(\omega), \Sigma(\omega)\ket_\rF \rd \omega,
\end{equation}
where $\Lambda: \mR\to \mH_m^+$ is an auxiliary function associated with the Fourier transform (\ref{Fhat}) by
\begin{equation}
\label{FF}
  \Lambda(\omega)
  :=
  \wh{F}(\omega)^* \wh{F}(\omega).
\end{equation}
Up to $|p-m|$ zero eigenvalues, the matrix $\Lambda(\omega)$ is isospectral to $\wh{F}(\omega) \wh{F}(\omega)^*$ on the right-hand side of (\ref{S}), so that the $\cH_{2k}$-norms in (\ref{H2k}) can be expressed in terms of (\ref{FF}) as
\begin{equation*}
\label{H2k1}
    \|F\|_{2k}
     :=
    \sqrt[2k]{
    \frac{1}{2\pi}
    \int_{\mR}
    \Tr(\Lambda(\omega)^k)
    \rd \omega},
    \qquad
    k \in \mN.
\end{equation*}
In particular, application of the Cauchy–Bunyakovsky–Schwarz inequality to the right-hand side of (\ref{Zvar}) leads to an upper bound for the root mean square value of the output process:
\begin{align}
\nonumber
    \|F\Gamma\|_2
    & =
    \sqrt{\bE(|Z(t)|^2)}
    \\
\label{Zvar1}
    & \<
    \frac{1}{\sqrt{2\pi}}
    \sqrt[4]{
    \int_{\mR}
    \|\Lambda(\omega)\|_\rF^2
    \rd \omega
    \int_{\mR}
    \|\Sigma(\omega)\|_\rF^2
    \rd \omega}
    =
    \|F\|_4 \|\Gamma\|_4,
\end{align}
where
\begin{equation*}
\label{GH4}
    \|\Gamma\|_4
    =
    \sqrt[4]{
    \frac{1}{2\pi}
    \int_{\mR}
    \|\Sigma(\omega)\|_\rF^2
    \rd \omega}
\end{equation*}
is the $\cH_4$-norm of the transfer function $\Gamma$ of the input shaping filter, in accordance with (\ref{H4}), (\ref{GG}). The inequality (\ref{Zvar1}) also yields an upper bound on the induced norm of the system as a linear operator acting from $\cH_4^{m\x m}$ to $\cH_2^{p\x m}$:
$$
    \sup_{\Gamma \in \cH_4^{m\x m}\setminus \{0\}}
    \frac{\|F\Gamma\|_2}{\|\Gamma\|_4}
    \<
    \|F\|_4,
$$
which illustrates the role of the $\cH_4$-norms for the operator theoretic properties of linear systems.

We will now discuss a variational inequality involving the covariance-analytic functionals    (\ref{phiFF}). To this end, consider the Legendre transformation \cite{R_1970} of a trace-analytic function  on Hermitian matrices whose eigenvalues are contained by an interval $\Delta:= (a, b )$ (with possibly infinite endpoints $a<b$):
\begin{equation}
\label{fL}
    \fL(\psi)(L)
    :=
    \sup_{M \in \mH_m:\ a I_m \prec M\prec bI_m}
    (
    \bra
        L, M
    \ket_\rF - \Tr \psi(M)),
    \qquad
    L \in \mH_m,
\end{equation}
where $\psi$ is an analytic function in a neighbourhood  of $\Delta$ in the complex plane with real values on this interval.
The following lemma establishes invariance of a class of such functions under the Legendre transformation.

\begin{lemma}
\label{lem:LT}
Suppose the function $\psi$, described in the context of (\ref{fL}), has a positive second derivative:
\begin{equation}
\label{psi''}
    \psi''(z)>0,
    \qquad
    z \in \Delta.
\end{equation}
Then the Legendre transformation of $\Tr\psi$ yields a trace-analytic function, representable as
\begin{equation}
\label{Lpsi}
        \fL(\psi)(L)
        =
        \Tr \psi_*(L)
\end{equation}
for any matrix $L\in \mH_m$ with eigenvalues in the interval
$    \psi'(\Delta) =(\psi'(a+), \psi'(b-))$
whose endpoints are the corresponding one-sided limits of the first derivative $\psi'$. Here,
\begin{equation}
\label{psi*}
        \psi_*(z) := z v(z) - \psi(v(z))
\end{equation}
is the Legendre transformation of $\psi$ as a function of a real variable,
with $v:\psi'(\Delta)\to \Delta$ the functional inverse of $\psi'$:
\begin{equation}
\label{v}
  v:= (\psi')^{-1}.
\end{equation}
The function $\psi_*$ in (\ref{psi*}) is analytic in a neighbourhood of $\psi'(\Delta)$ in the complex plane  and satisfies
\begin{equation}
\label{psi*''}
    \psi_*''(z)>0,
    \qquad
    z \in \psi'(\Delta).
\end{equation}
\hfill$\square$
\end{lemma}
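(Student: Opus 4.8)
The plan is to reduce the matrix variational problem (\ref{fL}) to the scalar Legendre duality through the spectral calculus, after first disposing of the scalar bookkeeping about $\psi_*$. First I would record that (\ref{psi''}) makes $\psi$ strictly convex on $\Delta$, so that $\psi'$ is a strictly increasing real-analytic bijection of $\Delta$ onto $\psi'(\Delta)=(\psi'(a+),\psi'(b-))$ and the map $v$ in (\ref{v}) is its well-defined inverse. Since $\psi$ is analytic on a complex neighbourhood of $\Delta$ with $(\psi')' = \psi'' \neq 0$ there, the holomorphic inverse function theorem renders $v$ analytic on a neighbourhood of $\psi'(\Delta)$; hence $\psi_*$ in (\ref{psi*}), being a product/composition of analytic functions, is analytic there. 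Differentiating (\ref{psi*}) and cancelling via $\psi'(v(z))=z$ gives $\psi_*'(z)=v(z)$ and then $\psi_*''(z)=v'(z)=1/\psi''(v(z))>0$, which is exactly (\ref{psi*''}).

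Next I would handle the matrix part. Because $\psi$ is convex (indeed strictly convex) on $\Delta$, the trace function $M\mapsto \Tr\psi(M)$ is convex on the operator interval $\{M\in\mH_m:\ aI_m\prec M\prec bI_m\}$, a classical trace-convexity property (cf.\ \cite{L_1973,VP_2010a}), so that $M\mapsto \bra L,M\ket_\rF-\Tr\psi(M)$ is concave. Invoking the standard identity that the Fréchet derivative of $\Tr\psi(M)$ in a Hermitian direction $H$ is $\Tr(\psi'(M)H)$, so that its Frobenius gradient is $\psi'(M)$, the first-order condition for (\ref{fL}) reads $L=\psi'(M)$, i.e.\ $M=v(L)$ interpreted spectrally. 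When $L$ has eigenvalues in $\psi'(\Delta)$, this $M=v(L)$ has eigenvalues in $\Delta$ and is therefore strictly interior to the feasible set; an interior critical point of a concave function is a global maximiser, so the supremum is genuinely attained at $M=v(L)$.

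Finally I would evaluate the optimal value. Diagonalising $L=U\diag(\ell_1,\dots,\ell_m)U^*$ with $\ell_j\in\psi'(\Delta)$, the maximiser $v(L)=U\diag(v(\ell_1),\dots,v(\ell_m))U^*$ commutes with $L$, whence $\bra L,v(L)\ket_\rF=\sum_j\ell_j v(\ell_j)$ and $\Tr\psi(v(L))=\sum_j\psi(v(\ell_j))$; subtracting and recognising (\ref{psi*}) termwise gives $\fL(\psi)(L)=\sum_j\psi_*(\ell_j)=\Tr\psi_*(L)$, which is (\ref{Lpsi}).

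The hard part is the matrix step rather than the scalar calculus: one must correctly invoke the convexity of $\Tr\psi(M)$ and the gradient identity $\nabla\Tr\psi(M)=\psi'(M)$, and then verify that the critical point $v(L)$ lies in the open feasible set so that the supremum is attained at an interior point. An alternative route that avoids differentiating trace functions would combine von Neumann's trace inequality $\bra L,M\ket_\rF\<\sum_j\ell_j^\downarrow\mu_j^\downarrow$ with the scalar Young inequality $\ell\mu\<\psi(\mu)+\psi_*(\ell)$ applied eigenvalue-wise to obtain the bound $\fL(\psi)(L)\<\Tr\psi_*(L)$, the reverse inequality following from the commuting admissible choice $M=v(L)$; this trades the convexity-and-gradient machinery for rearrangement inequalities.
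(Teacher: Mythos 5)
Your proposal is correct and follows essentially the same route as the paper's proof: the scalar Legendre calculus for $\psi_*$ (analyticity of $v=(\psi')^{-1}$ via the analytic inverse/implicit function theorem and $\psi_*''=v'>0$), trace-convexity of $M\mapsto\Tr\psi(M)$, the Fr\'echet-gradient identity $\d_M\Tr\psi(M)=\psi'(M)$ giving the stationarity condition $M=v(L)$, and evaluation of the optimum (your diagonalisation step is just the spectral-calculus form of the paper's direct substitution $\fL(\psi)(L)=\Tr(Lv(L)-\psi(v(L)))$). Your explicit remark that $v(L)$ is an interior critical point of a concave function, hence a global maximiser, is a slightly more careful phrasing of the attainment argument the paper asserts, but it is the same proof.
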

\begin{proof}
In view of \cite[Lemma 4, Appendix A]{VP_2010a} (see also \cite[p. 270]{SIG_1998}), the Frechet derivative of $\Tr \psi$  on the real Hilbert space $\mH_m$ is related to the derivative  $\psi'$ (as a function of a complex variable) by
\begin{equation}
\label{psi'}
    \d_M \Tr \psi(M) = \psi'(M),
    \qquad
    M \in \mH_m,
    \
    a I_m \prec M \prec b I_m.
\end{equation}
In combination with the identity $\d_M \bra L, M \ket_\rF = L$ for any matrix $L \in \mH_m$, the relation (\ref{psi'}) leads to
\begin{equation}
\label{diff}
    \d_M
    (
        \bra
        L, M
    \ket_\rF - \Tr \psi(M)
    ) = L - \psi'(M).
\end{equation}
The condition (\ref{psi''}) implies that  $\psi'$ is strictly increasing on $\Delta$ and, therefore,  has a functional inverse $v: \psi'(\Delta)\to \Delta$ in (\ref{v}), which is differentiable on the interval $\psi'(\Delta)$, with
\begin{equation}
\label{''}
    v'(z) = \frac{1}{\psi''(v(z))} = \psi_*''(z) > 0,
    \qquad
    z \in \psi'(\Delta)  ,
\end{equation}
thus proving (\ref{psi*''}) for the function $\psi_*$ in (\ref{psi*}), with the second  equality in (\ref{''}) following   from the properties of the Legendre transformation \cite{R_1970}.
Moreover, since $\psi$ is analytic  in a neighbourhood of $\Delta$, then by applying the analytic implicit function theorem (see, for example, \cite[Theorem 2.2.8 on p. 24]{H_1990}), it follows that $v$ in (\ref{v}) admits an analytic continuation to a neighbourhood of the interval $\psi'(\Delta)$ in the complex plane, and hence, so also does $\psi_*$ in (\ref{psi*}).
Now, the strict convexity of $\psi$ on $\Delta$ is inherited by $\Tr \psi(M)$ as a function of $M \in \mH_m$ such that $a I_m \prec M \prec b I_m$. Hence, for any $L \in \mH_m$ satisfying $\psi'(a+) I_m \prec L \prec \psi'(b-) I_m$, the strictly concave function $M \mapsto \bra
        L, M
    \ket_\rF - \Tr \psi(M)$ achieves its supremum in (\ref{fL}) at a unique  matrix $M\in \mH_m$, with $a I_m \prec M \prec b I_m$, at
which the Frechet derivative (\ref{diff}) vanishes, so that $\psi'(M) = L$ or, equivalently,
\begin{equation}
\label{argmax}
    \mathop{\mathrm{arg\, max}}_{M \in \mH_m:\ a I_m \prec M \prec b I_m}
    (
    \bra
        L, M
    \ket_\rF - \Tr \psi(M))
    =
    v(L).
\end{equation}
By substituting the right-hand side of (\ref{argmax}) for $M$, the Legendre transformation  in (\ref{fL}) is represented as
$$
    \fL(\psi)(L)
     =
    \bra
        L, v(L)
    \ket_\rF
    -
    \Tr \psi(v(L))
    =
    \Tr
    (
        L v(L)
    -
    \psi(v(L))),
$$
which establishes (\ref{Lpsi}).
\end{proof}

The functions $\psi: \Delta \to \mR$ and $\psi_*: \psi'(\Delta)\to \mR$
in Lemma~\ref{lem:LT} are strictly convex and form a convex conjugate pair (related by the Legendre transformation for functions of a real variable).

Now, suppose the input spectral density $\Sigma$ in (\ref{GG}) is known imprecisely, and the statistical uncertainty is described by
\begin{equation}
\label{fG}
    \fG_{\psi,d}
    :=
    \{
        \Gamma
        \in
        \cH_\infty^{m\x m}:
        \
        J_\psi(\Gamma) \< d
    \}.
\end{equation}
Here, $\psi$ is a given function, which takes nonnegative values on the positive half-line $(0,+\infty)$ and is analytic in its neighbourhood in the complex plane. The corresponding
covariance-analytic functional $J_\psi$ is evaluated at the transfer function $\Gamma$ of the input shaping filter as
\begin{equation*}
\label{psiG}
    J_\psi(\Gamma)
    :=
    \frac{1}{2\pi}
  \int_{\mR}
  \Tr \psi(\Sigma(\omega))
  \rd \omega,
\end{equation*}
in accordance with (\ref{phiFF}), (\ref{GG}), for which it is assumed that $\det \Gamma(i\omega) \ne 0$ (that is, $\Sigma(\omega)\succ 0$) at almost all frequencies $\omega \in \mR$. 
While $\psi$ reflects the general structure of the uncertainty class $\fG_{\psi,d}$ in (\ref{fG}), its ``size''  is measured by a parameter $d\> 0$.  The following theorem discusses the worst-case output variance in the framework of this uncertainty description.

\begin{theorem}
\label{th:supvar}
Suppose the statistical uncertainty about the input $W$ is described by (\ref{fG}), where the function $\psi$,  with nonnegative values on $\Delta:= (0,+\infty)$ and analytic in its neighbourhood in the complex plane, satisfies (\ref{psi''}). Then for any $d\> 0$, the worst-case value of the output variance for the system $F$ in (\ref{Zvar}) admits a guaranteed upper bound
\begin{equation}
\label{supvar}
    \sup_{\Gamma \in \fG_{\psi,d}}
    \bE(|Z(t)|^2)
    \<
    \inf_{\sigma > 0}
    \Big(
    \frac{1}{\sigma}
    (
        J_{\psi_*}(\sqrt{\sigma}F)
        +
        d
    )
    \Big)  ,
\end{equation}
where $\psi_*$ is the convex conjugate of $\psi$, given by (\ref{psi*}), (\ref{v}) of Lemma~\ref{lem:LT}. \hfill$\square$
\end{theorem}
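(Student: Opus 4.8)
The plan is to reduce (\ref{supvar}) to a frequency-wise matrix Fenchel--Young inequality for the convex-conjugate pair $(\psi,\psi_*)$ supplied by Lemma~\ref{lem:LT}, followed by integration over frequency and a two-stage optimisation. To this end, I would fix an arbitrary $\sigma>0$ together with $\Gamma\in\fG_{\psi,d}$, and begin from the representation (\ref{Zvar}), (\ref{FF}) of the output variance,
$$
    \bE(|Z(t)|^2)
    =
    \frac{1}{2\pi}
    \int_\mR
    \bra \Lambda(\omega),\Sigma(\omega)\ket_\rF
    \rd\omega,
$$
where $\Lambda(\omega)=\wh{F}(\omega)^*\wh{F}(\omega)\succcurlyeq 0$ and $\Sigma(\omega)=\Gamma(i\omega)\Gamma(i\omega)^*\succ 0$ (for almost all $\omega$) are Hermitian matrices of order $m$.

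The key ingredient is the matrix Fenchel--Young inequality. Since, by (\ref{fL}) and (\ref{Lpsi}), the quantity $\Tr\psi_*(L)=\fL(\psi)(L)$ is the supremum of $\bra L,M\ket_\rF-\Tr\psi(M)$ over the admissible matrices $M$, the substitution $L:=\sigma\Lambda(\omega)$ and $M:=\Sigma(\omega)$ yields, at almost every frequency,
$$
    \sigma\bra \Lambda(\omega),\Sigma(\omega)\ket_\rF
    \<
    \Tr\psi(\Sigma(\omega))
    +
    \Tr\psi_*(\sigma\Lambda(\omega)).
$$
I would then divide by $\sigma$, integrate over $\omega\in\mR$, and recall the definition (\ref{phiFF}) of the covariance-analytic functionals, using the $\Lambda$-based form of (\ref{H2k}); the latter coincides with the $S$-based form because $\Lambda(\omega)$ and the output spectral density $S(\omega)$ of $\sqrt{\sigma}F$ are isospectral up to zero eigenvalues and $\psi_*(0)=0$. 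This produces, for every $\Gamma\in\fG_{\psi,d}$ and every $\sigma>0$,
$$
    \bE(|Z(t)|^2)
    \<
    \frac{1}{\sigma}
    \big(
        J_\psi(\Gamma)
        +
        J_{\psi_*}(\sqrt{\sigma}F)
    \big)
    \<
    \frac{1}{\sigma}
    \big(
        d
        +
        J_{\psi_*}(\sqrt{\sigma}F)
    \big),
$$
where the second inequality invokes the defining constraint $J_\psi(\Gamma)\< d$ of the uncertainty class (\ref{fG}). As the right-hand side is independent of $\Gamma$, taking the supremum over $\Gamma\in\fG_{\psi,d}$ and then the infimum over $\sigma>0$ delivers (\ref{supvar}).

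The integration and the two optimisation steps are routine. The step I expect to demand the most care is the frequency-wise Fenchel--Young bound, where I must check the admissibility hypotheses under which Lemma~\ref{lem:LT} applies: that $\Sigma(\omega)$ has its eigenvalues in $\Delta=(0,+\infty)$, which is ensured by $\det\Gamma(i\omega)\ne 0$ almost everywhere together with $\Gamma\in\cH_\infty^{m\x m}$, and that $\sigma\Lambda(\omega)$ has its eigenvalues in $\psi'(\Delta)$, which follows from $\Lambda(\omega)\preccurlyeq\|F\|_\infty^2 I_m$ and the decay of $\Lambda(\omega)$ as $\omega\to\infty$. I would also verify that the integrals of $\Tr\psi(\Sigma(\omega))$ and $\Tr\psi_*(\sigma\Lambda(\omega))$ converge, so that the pointwise inequality may be integrated term by term; the normalisation $\psi_*(0)=0$, inherited from $\psi$ attaining its infimum $0$ on $(0,+\infty)$, is precisely what guarantees the finiteness of $J_{\psi_*}(\sqrt{\sigma}F)$ and reconciles its $\Lambda$- and $S$-based evaluations.
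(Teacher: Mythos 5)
Your proposal follows essentially the same route as the paper's proof: the matrix Fenchel--Young inequality for the convex conjugate pair $(\Tr\psi,\Tr\psi_*)$ supplied by Lemma~\ref{lem:LT}, applied pointwise in frequency with $L:=\sigma\Lambda(\omega)$, $M:=\Sigma(\omega)$, then integrated via (\ref{Zvar}), combined with the constraint $J_\psi(\Gamma)\< d$ from (\ref{fG}), and finally optimised over $\Gamma$ and $\sigma$. The only fine point where you differ is admissibility: your claim that $\sigma\Lambda(\omega)$ always has spectrum in $\psi'(\Delta)$ can fail (for instance, when $\psi'$ is bounded above, as in the risk-sensitive example, once $\sigma\|F\|_\infty^2$ exceeds the upper limit of $\psi'$), whereas the paper instead extends the Fenchel inequality to inadmissible matrices by setting its right-hand side to $+\infty$; such values of $\sigma$ merely contribute $+\infty$ to the infimum in (\ref{supvar}), so your conclusion is unaffected.
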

\begin{proof}
For admissible matrices $L, M \in \mH_m$ in (\ref{fL}), (\ref{Lpsi}) of Lemma~\ref{lem:LT}, the Fenchel inequality takes the form
\begin{equation}
\label{LM}
    \bra L, M\ket_\rF \< \Tr (\psi_*(L) + \psi(M)),
\end{equation}
which is extended to other Hermitian matrices $L, M$ by letting its right-hand side equal $+\infty$.
Multiplication of $L$ by a scalar $\sigma >0$ and division of both sides of (\ref{LM}) by $\sigma$ yields
\begin{equation}
\label{LM0}
    \bra L, M\ket_\rF
    =
    \frac{1}{\sigma}
    \bra \sigma L, M\ket_\rF
    \<
    \frac{1}{\sigma}\Tr (\psi_*(\sigma L) + \psi(M)).
\end{equation}
Since the left-hand side of (\ref{LM0}) does not depend on $\sigma$, this inequality can be tightened as
\begin{equation}
\label{LM1}
    \bra L, M\ket_\rF
    \<
    \inf_{\sigma >0}\Big(\frac{1}{\sigma}\Tr (\psi_*(\sigma L) + \psi(M))\Big).
\end{equation}
Application of (\ref{LM}) to the matrices $L:= \Lambda(\omega)$ and $M:= \Sigma(\omega)$  in (\ref{FF}), (\ref{GG}), integration over the frequencies $\omega \in \mR$ in accordance with  (\ref{Zvar}) and the scaling argument as in  (\ref{LM1})   lead to
\begin{align}
\nonumber
    \|F\Gamma\|_2^2
     & \<
    \frac{1}{2\pi}
    \inf_{\sigma > 0}
    \Big(
    \frac{1}{\sigma}
    \int_{\mR}
    \Tr (\psi_*(\sigma \Lambda(\omega)) + \psi(\Sigma))
    \rd \omega
    \Big)\\
\label{JJ1}
    & =
    \inf_{\sigma > 0}
    \Big(
    \frac{1}{\sigma}
    (
        J_{\psi_*}(\sqrt{\sigma}F)
        +
        J_\psi(\Gamma)
    )
    \Big)  .
\end{align}
Here, the covariance-analytic functionals $J_\psi$, $J_{\psi_*}$, associated with the function $\psi$ and its convex conjugate $\psi_*$  in (\ref{psi*}), are evaluated at the transfer functions $\Gamma$ and $\sqrt{\sigma}F$ of the input shaping filter and the rescaled  system, respectively; see Fig.~\ref{fig:FG}.  Since the right-hand side of (\ref{JJ1}) depends on $J_\psi(\Gamma)$ in a monotonic fashion, and $J_\psi(\Gamma) \< d$ for any $\Gamma \in \fG_{\psi, d}$ in the uncertainty class (\ref{fG}), then
$$
    \sup_{\Gamma \in \fG_{\psi,d}}
    \|F\Gamma\|_2^2
    \<
    \inf_{\sigma > 0}
    \Big(
    \frac{1}{\sigma}
    (
        J_{\psi_*}(\sqrt{\sigma}F)
        +
        d
    )
    \Big)  ,
$$
which establishes (\ref{supvar}).
\end{proof}

For any given $\sigma>0$, the right-hand side of (\ref{supvar}) depends monotonically on the quantity
$$
    J_{\psi_*}(\sqrt{\sigma} F) = J_\phi (F),
$$
where the function
\begin{equation}
\label{phipsi}
    \phi(z) := \psi_*(\sigma z)
\end{equation}
is related to $\psi_*$ in (\ref{psi*}). In a robust control setting, where the transfer function $F$ pertains to the closed-loop system,  this monotonicity justifies the minimisation of the covariance-analytic cost (\ref{phiFF}) as a way to guarantee an improved bound on the worst-case output variance in (\ref{supvar}). This approach is similar to minimax LQG control \cite{PJD_2000}, except that here the relative entropy description of uncertainty  is replaced with (\ref{fG}), while the risk-sensitive cost (\ref{QE}) is replaced with (\ref{phiFF}).

The uncertainty description (\ref{fG})  and the corresponding cost (\ref{phiFF}), specified by (\ref{phipsi}),  employ the covariance-analytic functionals $J_\psi$, $J_{\psi_*}$. Their link through the Legendre transformation (\ref{psi*}) resembles (and, in some respects, generalises)  the duality relation \cite{DE_1997} which underlies the role of the risk-sensitive control for robustness properties  of systems \cite{DJP_2000}  in minimax LQG settings.

Although (\ref{fG})  is concerned only with the input spectral densities irrespective of entropy constructs, we note that the function $\phi_\theta$ in (\ref{QEphi}), associated with the risk-sensitive cost $\Xi(\theta)$ in  (\ref{QE}), leads to $$
    \psi_*(z) = \phi_\theta(z/\sigma) = -\frac{1}{2}\ln(1-2z),
    \qquad
    z < \frac{1}{2},
$$
in (\ref{phipsi}), considered  with the scaling $\theta=2 \sigma$ for convenience.  This function $\psi_*$ is the convex conjugate of
\begin{equation*}
\label{psit}
    \psi(z):= \frac{1}{2}(z - 1-\ln z),
    \qquad
    z>0,
\end{equation*}
which is a strictly convex nonnegative function  (admitting an analytic continuation to a neighbourhood of $(0,+\infty)$ in the complex plane),  with
$$
    \psi(1) = \min_{z >0} \psi(z) = 0.
$$
The corresponding uncertainty class $\fG_{\psi, d}$ in (\ref{fG}) takes into account the deviation of the input spectral densities $\Sigma$ in (\ref{GG}) from the identity matrix  $I_m$, which is the only element of $\fG_{\psi,0}$.

\section{Strictly proper finite-dimensional systems}
\label{sec:fin}

We will now proceed to the state-space computation of the covariance-analytic functionals (\ref{phiFF}) (which, in essence,   reduces to that of the Hardy-Schatten norms (\ref{H2k})) for finite-dimensional systems. More precisely, consider a class of  systems (\ref{ZfW}) with an $\mR^n$-valued stationary Gaussian state process $X:= (X(t))_{t\in \mR}$ governed by an Ito SDE as
\begin{equation}
\label{XZ}
  \rd X = A X \rd t + B\rd W,
  \qquad
  Z= CX,
\end{equation}
where $A\in \mR^{n\x n}$, $B \in \mR^{n\x m}$, $C \in \mR^{p\x n}$ are given matrices, with $A$ Hurwitz, thus making the impulse response
\begin{equation}
\label{fABC}
    f(t)
    =
    C\re^{tA}B,
    \qquad
    t \>0
\end{equation}
square integrable. This system has a strictly proper real-rational transfer function (\ref{Ff}):
\begin{equation}
\label{F0}
    F(s)
    :=
    C E(s)B,
    \qquad
    s \in \mC,
\end{equation}
where
\begin{equation}
\label{Es}
    E(s)
    :=
    (sI_n - A)^{-1}
\end{equation}
is an auxiliary $\mC^{n\x n}$-valued transfer function from the increments of the process $BW$ to the internal state $X$ of the system in (\ref{XZ}). The state-space realisations of $F$, $E$ are denoted by
\begin{equation}
\label{Ereal}
    F
    =
      \left[
    \begin{array}{c|c}
    A & B\\
      \hline
      C &  0
    \end{array}
    \right],
    \qquad
    E
    =
      \left[
    \begin{array}{c|c}
    A & I_n\\
      \hline
      I_n &  0
    \end{array}
    \right],
\end{equation}
so that the corresponding system conjugates
\begin{align}
\label{Fsim}
    F^\sim(s)
    & :=
    F(-\overline{s})^*
    =
    B^\rT E^\sim(s) C^\rT,\\
\label{Esim}
    E^\sim(s)
    & :=
    E(-\overline{s})^*
    =
    -(sI_n + A^\rT)^{-1}
\end{align}
are represented as
\begin{equation}
\label{Esimreal}
    F^\sim
    =
      \left[
    \begin{array}{c|c}
    -A^\rT & C^\rT\\
      \hline
      -B^\rT &  0
    \end{array}
    \right],
    \qquad
    E^\sim
    =
      \left[
    \begin{array}{c|c}
    -A^\rT & I_n\\
      \hline
      -I_n &  0
    \end{array}
    \right].
\end{equation}
Since the matrix $A$ is Hurwitz, the transfer function $F$ in (\ref{F0}) satisfies (\ref{Fgood1}) along with its corollary (\ref{Fgood}), whereby it has finite Hardy-Schatten norms $\|F\|_{2k}$ of any order in (\ref{H2k}).

In the finite-dimensional case being considered, the spectral density $S$  and its powers in (\ref{H2k})  are rational functions which can be    identified with transfer functions of such systems. The state-space realisations of the systems $F$, $E$ and their conjugates $F^\sim$, $E^\sim$ in (\ref{Ereal})--(\ref{Esimreal}) allow the spectral density $S$ in (\ref{S}) (with a slight abuse of notation,  as a function of $i\omega$ rather than the frequency $\omega$ itself) to be realised as the transfer function of the system
\begin{equation}
\label{Sreal}
    S
    =
    F F^\sim
    =
    CE \mho E^\sim C^\rT
    =
    \left[
    \begin{array}{cc|c}
      -A^\rT & 0 & C^\rT\\
      -\mho & A & 0\\
      \hline
      0 & C & 0
    \end{array}
    \right],
\end{equation}
where
\begin{equation}
\label{BB}
    \mho:= BB^\rT.
\end{equation}
In view of the identity
$$
  \begin{bmatrix}
    I_n & 0 \\
    \alpha & I_n
  \end{bmatrix}
  \begin{bmatrix}
    I_n & 0 \\
    \beta & I_n
  \end{bmatrix}
  =
  \begin{bmatrix}
    I_n & 0 \\
    \alpha + \beta & I_n
  \end{bmatrix},
  \qquad
  \alpha, \beta \in \mC^{n\x n} ,
$$
the dynamics matrix of the state-space realisation (\ref{Sreal}) is block diagonalised by the similarity transformation
\begin{align}
\nonumber
  \begin{bmatrix}
    I_n & 0 \\
    P & I_n
  \end{bmatrix}
  \begin{bmatrix}
    -A^\rT & 0\\
    -\mho & A
  \end{bmatrix}
  \begin{bmatrix}
    I_n & 0 \\
    -P & I_n
  \end{bmatrix}
    & =
  \begin{bmatrix}
    -A^\rT & 0\\
    -P A^\rT-\mho & A
  \end{bmatrix}
  \begin{bmatrix}
    I_n & 0 \\
    -P & I_n
  \end{bmatrix}\\
\label{block2}
    & =
  \begin{bmatrix}
    -A^\rT & 0\\
    A P & A
  \end{bmatrix}
  \begin{bmatrix}
    I_n & 0 \\
    -P & I_n
  \end{bmatrix}=
  \begin{bmatrix}
    -A^\rT & 0\\
    0 & A
  \end{bmatrix},
\end{align}
which employs the covariance matrix of the stationary state process $X$ in (\ref{XZ}),  related by
\begin{equation}
\label{P}
  P
   :=
  \bE(X(t)X(t)^\rT)
  =
  \frac{1}{2\pi}
  \int_{\mR}
  E(i\omega) \mho E(i\omega)^*\rd \omega
    =
    \cL_A(\mho)
\end{equation}
to the transfer function $E$ in (\ref{Es}) and the matrix $\mho$ from (\ref{BB}),
and satisfying the ALE
\begin{equation}
\label{PALE}
  AP + PA^\rT + \mho  = 0,
\end{equation}
so that $P$ coincides with the controllability Gramian of the pair $(A,B)$.
The right-hand side of (\ref{P}) involves a linear operator $\cL_A$ on $\mC^{n\x n}$, associated with the Hurwitz matrix $A$ as
\begin{equation}
\label{cL}
  \cL_A(V) : = \int_{\mR_+} \re^{tA} V \re^{tA^\rT}\rd t,
  \qquad
  V \in \mC^{n\x n}.
\end{equation}
Due to the commutativity between $\cL_A$ and the matrix transpose ($\cL_A(V^\rT) = \cL_A(V)^\rT$ for any $V \in \mR^{n\x n}$),  the subspace $\mS_n$ of real symmetric matrices of order $n$ is invariant under $\cL_A$:
\begin{equation}
\label{incs}
    \cL_A(\mS_n) \subset \mS_n.
\end{equation}
Also, since $A$ is Hurwitz,   the inclusion (\ref{incs}) is complemented by the invariance of the set $\mS_n^+$
of real positive semi-definite symmetric matrices of order $n$ under the operator $\cL_A$:
\begin{equation}
\label{inc2}
    \cL_A(\mS_n^+) \subset \mS_n^+.
\end{equation}
Therefore,  in view of (\ref{block2}), the system (\ref{Sreal})  admits an equivalent state-space realisation:
\begin{equation}
\label{Sreal1}
    S
    =
    \left[
    \begin{array}{c|c}
      a & b\\
      \hline
      c & 0
    \end{array}
    \right],
    \quad
        a
    :=
    \begin{bmatrix}
        -A^\rT & 0\\
  0 & A
    \end{bmatrix},
    \quad
    b:=
    \begin{bmatrix}
      C^\rT\\
      PC^\rT
    \end{bmatrix},
    \quad
    c:=
    \begin{bmatrix}
      -CP  & \quad C
    \end{bmatrix}.
\end{equation}

\section{Infinite cascade spectral factorization}
\label{sec:inf}

With the spectral density $S$ in (\ref{S}) being represented as the transfer function of a finite-dimensional system with the strictly proper   state-space realisation (\ref{Sreal}) (or (\ref{Sreal1})),
the matrix $\phi(S)$ in (\ref{phiFF}) corresponds to the transfer  function for an infinite cascade of such systems shown in Fig.~\ref{fig:phi}.
\begin{figure}[htbp]
\centering
\unitlength=0.75mm
\linethickness{0.5pt}
\begin{picture}(55.00,41.00)
    \put(-15,2){\dashbox(80,37)[cc]{}}
    \put(-14,38){\makebox(0,0)[lt]{{$\phi(S)$}}}
    \put(11,28){\framebox(8,8)[cc]{$S$}}
    \put(31,28){\framebox(8,8)[cc]{$S$}}
    \put(51,28){\framebox(8,8)[cc]{$S$}}

    \put(11,32){\vector(-1,0){12}}
    \put(51,32){\vector(-1,0){12}}
    \put(31,32){\vector(-1,0){12}}
    \put(71,32){\vector(-1,0){12}}
    \put(45,32){\vector(0,-1){8}}
    \put(25,32){\vector(0,-1){8}}
    \put(5,32){\vector(0,-1){8}}
    \put(41,16){\framebox(8,8)[cc]{$\phi_1$}}
    \put(21,16){\framebox(8,8)[cc]{$\phi_2$}}
    \put(1,16){\framebox(8,8)[cc]{$\phi_3$}}
    \put(45,16){\line(0,-1){9}}
    \put(25,16){\vector(0,-1){6.3}}
    \put(5,16){\vector(0,-1){6.3}}
    \put(25,7){\circle{5}}
    \put(5,7){\circle{5}}
    \put(45,7){\vector(-1,0){17.3}}
    \put(22.3,7){\vector(-1,0){14.5}}
    \put(-9,7){\vector(-1,0){12}}
    \put(25,7){\makebox(0,0)[cc]{{$+$}}}
    \put(5,7){\makebox(0,0)[cc]{{$+$}}}
    \put(-2,32){\makebox(0,0)[rc]{{$\cdots$}}}
    \put(-2,7){\makebox(0,0)[rc]{{$\cdots$}}}
    \put(73,32){\makebox(0,0)[lc]{{$\ups$}}}
    \put(-24,7){\makebox(0,0)[lc]{{$\zeta$}}}
\end{picture}
\caption{An infinite cascade of identical linear systems with the common state-space realization of $S$ in  (\ref{Sreal}), (\ref{Sreal1}) and an external input $\ups$. The sum of their outputs, weighted by the coefficients  $\phi_k$  of the Taylor series expansion of the analytic  function $\phi$ from (\ref{phiFF}) in accordance with (\ref{JHS}), forms the output $\zeta$ of the system $\phi(S)$.
}
\label{fig:phi}
\end{figure}
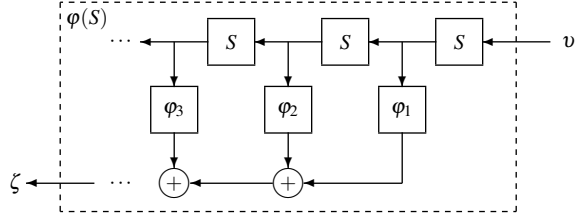
The resulting system  has an infinite dimensional state and the strictly proper state-space realisation
\begin{equation}
\label{phitrans}
    \phi(S)
    =
    \left[\begin{array}{cccc|c}
  a   & 0 & 0 &\ldots & b\\
  b c & a & 0 & \ldots & 0\\
  0   & bc & a & \ldots & 0\\
  \ldots    &\ldots   & \ldots  & \ldots & \ldots  \\
  \hline
  \phi_1  c & \phi_2  c & \phi_3 c & \cdots & 0
\end{array}\right],
\end{equation}
where $a$, $b$, $c$
are the state-space matrices of $S$ in (\ref{Sreal1}). In accordance with its cascade structure, the system  (\ref{phitrans}) has a  block  two-diagonal   lower triangular dynamics matrix. As established below, this system admits an inner-outer  factorization (see, for example, \cite{W_1972}) with a similar infinite cascade structure. This result is based on the following two lemmas and a theorem, which adapt a ``system transposition'' technique from \cite{VP_2022}.

\begin{lemma}
\label{lem:EOE}
Suppose the matrix $A \in \mR^{n\x n}$ is Hurwitz, and $U=U^\rT \in \mR^{n\x n}$ is a positive semi-definite matrix such that the pair $(A,\sqrt{U})$ is controllable.
Then the transfer function $E$ in (\ref{Es}) and its system conjugate $E^\sim$ in (\ref{Esim}) satisfy
\begin{equation}
\label{EOE}
    E(s) U E^\sim(s) = V E^\sim(s)  V^{-1}U V^{-1} E(s)V
\end{equation}
for any $s\in \mC$ beyond the spectra of $\pm A$, where $V$ is the unique solution of the ALE
\begin{equation}
\label{VALE}
    AV + VA^\rT + U = 0.
\end{equation}
\hfill$\square$
\end{lemma}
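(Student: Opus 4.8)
The plan is to collapse both sides of (\ref{EOE}) to the single common expression $E(s)V + V E^\sim(s)$, using the Lyapunov equation (\ref{VALE}) together with a telescoping (resolvent) identity. First I would record two structural facts about $V$. Transposing (\ref{VALE}) and invoking uniqueness of the solution for the Hurwitz matrix $A$ shows $V = V^\rT$; controllability of $(A,\sqrt{U})$ together with the Hurwitz property makes $V$ the positive definite controllability Gramian, so $V^{-1}$ exists, which is what legitimises the right-hand side of (\ref{EOE}).

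The key observation is a telescoping rewriting of $U$. From (\ref{VALE}) one has $U = -(AV + VA^\rT)$, and a direct expansion yields the exact identity
\begin{equation*}
    (sI_n - A)V - V(sI_n + A^\rT) = U,
\end{equation*}
valid for every $s$, since the $sV$ terms cancel. Substituting this into $E(s)UE^\sim(s)$ and using $E(s) = (sI_n-A)^{-1}$ and $E^\sim(s) = -(sI_n+A^\rT)^{-1}$ from (\ref{Es}), (\ref{Esim}), the outer resolvent in each of the two resulting terms cancels one factor, leaving
\begin{equation*}
    E(s)\,U\,E^\sim(s) = E(s)V + V E^\sim(s).
\end{equation*}

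For the right-hand side I would treat $V^{-1}$ as the solution of the \emph{dual} Lyapunov equation obtained by conjugating (\ref{VALE}): multiplying $AV+VA^\rT+U=0$ on both sides by $V^{-1}$ gives $V^{-1}A + A^\rT V^{-1} + V^{-1}UV^{-1} = 0$, so that $V^{-1}UV^{-1} = -(V^{-1}A + A^\rT V^{-1})$ admits the analogous telescoping form $V^{-1}UV^{-1} = V^{-1}(sI_n-A) - (sI_n+A^\rT)V^{-1}$. The same cancellation applied to the product $E^\sim(s)\,(V^{-1}UV^{-1})\,E(s)$ produces $V^{-1}E(s) + E^\sim(s)V^{-1}$, whence conjugating by $V$ on both sides gives
\begin{equation*}
    V E^\sim(s)\,(V^{-1}UV^{-1})\,E(s) V = E(s)V + V E^\sim(s),
\end{equation*}
which is exactly the value computed for the left-hand side, establishing (\ref{EOE}).

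The only delicate point, rather than a genuine obstacle, is bookkeeping the sign introduced by the convention $E^\sim(s) = -(sI_n+A^\rT)^{-1}$ and confirming that the two telescoping identities hold as exact matrix equalities for all $s$ outside the spectra of $\pm A$; each reduces to (\ref{VALE}) after the $s$-dependent terms cancel, so no spectral decomposition or limiting argument is required.
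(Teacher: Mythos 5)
Your proof is correct. Every step checks out: the symmetry of $V$ follows from uniqueness of the Lyapunov solution for Hurwitz $A$, controllability of $(A,\sqrt{U})$ gives $V\succ 0$ so that $V^{-1}$ exists, the telescoping identity $(sI_n-A)V - V(sI_n+A^\rT) = -(AV+VA^\rT) = U$ is an exact algebraic consequence of (\ref{VALE}), and with the sign convention $E^\sim(s) = -(sI_n+A^\rT)^{-1}$ both sides of (\ref{EOE}) collapse to the common expression $E(s)V + VE^\sim(s)$, for every $s$ outside the spectra of $\pm A$.

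Your route differs from the paper's in an instructive way: the paper does not prove the identity at all, but simply cites it as a corollary of an external result (Lemma 2 of the reference on state-space computation of quadratic-exponential functional rates), adding only the remark that $V\succ 0$ under the stated controllability hypothesis. Your argument makes the lemma self-contained, and it does so by exhibiting the mechanism behind it: the common value $E(s)V + VE^\sim(s)$ to which both sides reduce is precisely the partial-fraction (resolvent) decomposition of $E(s)UE^\sim(s)$ that underlies standard $\cH_2$-norm computations, while the right-hand side of (\ref{EOE}) is the same decomposition run through the ``dual'' Lyapunov equation $V^{-1}A + A^\rT V^{-1} + V^{-1}UV^{-1} = 0$ obtained by conjugating (\ref{VALE}) with $V^{-1}$. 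This exposes clearly why the Wick-like ``pulling'' of $E^\sim$ past $E$ in Theorem~\ref{th:EOEk} works: each such exchange is just a passage between the two partial-fraction representations, at the cost of conjugation by the Gramian. What the citation-based proof buys is brevity and consistency with the quantum version of the technique; what your proof buys is a transparent, purely algebraic verification requiring nothing beyond (\ref{VALE}) and invertibility of $V$.
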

\begin{proof}
The relation (\ref{EOE}) is a corollary of \cite[Lemma 2]{VP_2022}. The condition  $U\succcurlyeq 0$ and the controllability of $(A,\sqrt{U})$ ensure that the solution of (\ref{VALE}) satisfies $V \succ 0$ (and is, therefore, nonsingular).
\end{proof}

In application of Lemma~\ref{lem:EOE} to the matrix $U:= \mho$ from (\ref{BB}), the ALE (\ref{VALE}) coincides with (\ref{PALE}) and yields the covariance matrix $V:= P$ in (\ref{P}). Assuming that the pair $(A,B)$ is controllable (and hence, $P\succ 0$), the relation  (\ref{EOE}) allows the factors $E$ and $E^\sim$  in (\ref{Sreal}) to be rearranged as
\begin{equation}
\label{EmhoE}
  E \mho E^\sim = P E^\sim  P^{-1}\mho P^{-1} EP ,
\end{equation}
thus leading to
\begin{equation}
\label{SEE}
  S = CP E^\sim  P^{-1}\mho P^{-1} EPC^\rT ,
\end{equation}
so that the factor $E$ is moved to the right, while its dual $E^\sim$ is moved to the left. This rearrangement resembles the Wick ordering for mixed products of noncommuting annihilation and creation operators in quantum mechanics (see \cite{W_1950} and \cite[pp. 209--210]{J_1997}).   Theorem~\ref{th:EOEk} below uses (\ref{EmhoE}) in order to extend (\ref{SEE}) to arbitrary positive integer powers of $S$. Its formulation employs three sequences of matrices $\alpha_j, \beta_j, \gamma_j  \in \mR^{n\x n}$ computed recursively as
\begin{align}
\label{alf}
    \alpha_{j+1}
    & = \gamma_j \beta_j,\\
\label{bet}
    \beta_{j+1}
    & =
    \gamma_j^{-1} \alpha_j \Omega^{\delta_{j1}}\gamma_{j-1}\gamma_j^{-1},\\
\label{gam}
    \gamma_j
    & =
    \cL_A(\alpha_j \Omega^{\delta_{j1}}\gamma_{j-1}),
    \qquad
    j\in \mN
\end{align}
(where $\delta_{jk}$ is the Kronecket delta, and  the operator $\cL_A$ is given by (\ref{cL})),
with the initial conditions
\begin{equation}
\label{alfbetgam0}
        \alpha_1
    =
    \gamma_0
    =
    \cL_A(\mho)
    =
    P,
    \qquad
    \beta_1 = P^{-1}\mho P^{-1}.
\end{equation}
Here, use is made of an auxiliary matrix
\begin{equation}
\label{CC}
  \Omega
  :=
  C^\rT C,
\end{equation}
so that
$
    \Omega^{\delta_{j1}}
    =
    \left\{
    {\small\begin{matrix}
      \Omega & {\rm if}\ j=1\\
      I_n & {\rm otherwise}
    \end{matrix}}
    \right.
$.
It is convenient to extend (\ref{alf}) to $j=0$ as $\alpha_1 = \gamma_0 \beta_0$ by letting
\begin{equation}
\label{bet0}
    \beta_0:= \gamma_0^{-1} \alpha_1 = I_n,
\end{equation}
 in accordance with the first equality from (\ref{alfbetgam0}). In order for the recurrence equations (\ref{alf}), (\ref{bet}) to be valid for all $j$, it is assumed that the matrices $\gamma_j$ in (\ref{gam}), (\ref{alfbetgam0}) are nonsingular:
 \begin{equation}
 \label{gamdet}
   \det \gamma_j \ne 0,
   \qquad
   j\> 0.
 \end{equation}
In particular, letting $j:=1$ in (\ref{gam}) yields
\begin{equation}
\label{gam1}
    \gamma_1 = \cL_A(\alpha_1 \Omega\gamma_0) = \cL_A(P\Omega P)
\end{equation}
in view of (\ref{alfbetgam0}), and,  since $P\Omega P = PC^\rT (PC^\rT)^\rT$ due to (\ref{CC}), the condition $\det \gamma_1\ne 0$ for the matrix (\ref{gam1}) is equivalent to the controllability of $(A,PC^\rT)$.
The following lemma provides relevant properties of these matrices which will be used in the  proof of Theorem~\ref{th:EOEk}.

\begin{lemma}
\label{lem:alfbetgam}
The matrices $\beta_j$, $\gamma_j$, defined by (\ref{alf})--(\ref{bet0}) subject to (\ref{gamdet}),   are symmetric,  positive semi-definite and positive definite, respectively:
\begin{equation}
\label{betgam}
    \beta_j = \beta_j^\rT\succcurlyeq 0,
    \qquad
    \gamma_j = \gamma_j^\rT \succ 0,
    \qquad
    j \> 0.
\end{equation}
\hfill$\square$
\end{lemma}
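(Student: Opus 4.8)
The plan is to prove the two assertions in (\ref{betgam}) together by strong induction on the index, organised around the single auxiliary matrix $M_j := \alpha_j\Omega^{\delta_{j1}}\gamma_{j-1}$, which is exactly the argument of $\cL_A$ in (\ref{gam}). The pivotal remark is that $\alpha_j$ enters both (\ref{bet}) and (\ref{gam}) only through $M_j$, and that substituting $\alpha_j=\gamma_{j-1}\beta_{j-1}$ --- the extension of (\ref{alf}) to index $j-1$ secured by (\ref{bet0}) --- turns $M_j$ into the symmetric form
\begin{equation*}
    M_j = \gamma_{j-1}\big(\beta_{j-1}\Omega^{\delta_{j1}}\big)\gamma_{j-1}.
\end{equation*}
The central factor here is $\beta_0\Omega=\Omega=C^\rT C$ when $j=1$ (by (\ref{bet0}), (\ref{CC})) and is $\beta_{j-1}$ when $j\ge 2$; in both cases it is symmetric and positive semi-definite, so once $\gamma_{j-1}$ is symmetric the congruence above gives $M_j=M_j^\rT\succcurlyeq 0$. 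The reward is that this one matrix controls the next two: by (\ref{gam}) we have $\gamma_j=\cL_A(M_j)$, while rewriting (\ref{bet}) yields
\begin{equation*}
    \beta_{j+1} = \gamma_j^{-1} M_j \gamma_j^{-1}.
\end{equation*}

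For the base I would verify the statement at indices $0$ and $1$. Here $\gamma_0=P=\cL_A(\mho)$ with $\mho=BB^\rT\succcurlyeq 0$ is symmetric positive semi-definite by (\ref{incs}), (\ref{inc2}), and nonsingular --- hence $\gamma_0\succ 0$ --- since controllability of $(A,B)$ makes the solution of (\ref{PALE}) positive definite; next $\beta_0=I_n\succcurlyeq 0$, and $\beta_1=\gamma_0^{-1}\mho\gamma_0^{-1}$ is a congruence of $\mho$, hence symmetric positive semi-definite, while $\gamma_1=\cL_A(\gamma_0\Omega\gamma_0)$ is symmetric positive semi-definite (a congruence of $\Omega$ fed through $\cL_A$) and, by the nonsingularity hypothesis (\ref{gamdet}), satisfies $\gamma_1\succ 0$. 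For the inductive step at an index $k\ge 2$, assuming (\ref{betgam}) at all smaller indices, the symmetric form of $M_k=\gamma_{k-1}\beta_{k-1}\gamma_{k-1}$ is a congruence of $\beta_{k-1}\succcurlyeq 0$ by the symmetric matrix $\gamma_{k-1}$, so $M_k\succcurlyeq 0$; applying $\cL_A$ with (\ref{incs}), (\ref{inc2}) gives $\gamma_k=\cL_A(M_k)=\gamma_k^\rT\succcurlyeq 0$, upgraded to $\gamma_k\succ 0$ by (\ref{gamdet}). Simultaneously $\beta_k=\gamma_{k-1}^{-1}M_{k-1}\gamma_{k-1}^{-1}$ is a congruence, by the symmetric $\gamma_{k-1}^{-1}$, of the matrix $M_{k-1}$ already shown symmetric positive semi-definite (its central factor being $\Omega$ when $k=2$ and $\beta_{k-2}$ otherwise), so $\beta_k=\beta_k^\rT\succcurlyeq 0$, which closes the induction.

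The hard part, conceptually, is that the recursion is not manifestly symmetry-preserving: the products $\alpha_{j+1}=\gamma_j\beta_j$ in (\ref{alf}) are generically nonsymmetric, and so there is no a priori reason for the definiteness in (\ref{betgam}) to survive as the index grows. The whole point is that $\alpha_j$ is never used in isolation but only inside the symmetric package $M_j$, which is then both pushed through $\cL_A$ (producing $\gamma_j$) and conjugated by $\gamma_j^{-1}$ (producing $\beta_{j+1}$) --- two operations that each preserve the cone of symmetric positive semi-definite matrices. The single ingredient lying outside this self-sustaining structure is the strict definiteness of $\gamma_j$, which is not automatic but is furnished by the hypothesis (\ref{gamdet}) that a positive semi-definite $\gamma_j$ be nonsingular.
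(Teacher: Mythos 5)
Your proof is correct and follows essentially the same route as the paper's: both arguments substitute $\alpha_j=\gamma_{j-1}\beta_{j-1}$ into (\ref{bet}), (\ref{gam}) to expose the symmetric congruence structure (your $M_j$ is exactly the matrix $\gamma_{j-1}\beta_{j-1}\Omega^{\delta_{j1}}\gamma_{j-1}$ appearing in the paper's reduced recursions (\ref{betnext}), (\ref{gamnext})), then invoke the invariance properties (\ref{incs}), (\ref{inc2}) of $\cL_A$ together with (\ref{gamdet}) to run the induction. Packaging the argument around the single matrix $M_j$ is a slightly tidier bookkeeping of the same idea, and your appeal to controllability of $(A,B)$ for $\gamma_0\succ 0$ is interchangeable with the paper's direct use of (\ref{gamdet}) in view of the stated equivalence with (\ref{good}).
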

\begin{proof}
The symmetry and positive semi-definiteness of the matrices $\beta_0$, $\beta_1$, $\gamma_0$, $\gamma_1$ follow from (\ref{inc2}), (\ref{alfbetgam0}), (\ref{bet0}), (\ref{gam1}) since the matrices $\mho$, $\Omega$ in (\ref{BB}), (\ref{CC}) are symmetric and positive semi-definite, with   $\gamma_0\succ 0$ and $\gamma_1\succ 0$   due to  (\ref{gamdet}).     Therefore, (\ref{bet}) with $j:=1$ leads to
\begin{equation}
\label{bet2}
    \beta_2
    =
    \gamma_1^{-1} \alpha_1 \Omega\gamma_0 \gamma_1^{-1}
    =
    \gamma_1^{-1} P \Omega P \gamma_1^{-1} = \beta_2^\rT \succcurlyeq 0.
\end{equation}
Since $\Omega^{\delta_{j1}} = I_n$ for any $j \> 2$, then,
in view of (\ref{alf}), the recurrence relations (\ref{bet}), (\ref{gam}) reduce to
\begin{align}
\label{betnext}
    \beta_{j+1}
    & =
    \gamma_j^{-1} \overbrace{\gamma_{j-1} \beta_{j-1}}^{\alpha_j} \gamma_{j-1}\gamma_j^{-1},\\
\label{gamnext}
    \gamma_j
    & =
    \cL_A(\gamma_{j-1} \beta_{j-1}  \gamma_{j-1}),
    \qquad
    j\>2,
\end{align}
which, in combination with (\ref{gamdet}) and the inclusions (\ref{incs}), (\ref{inc2}),   implies the properties (\ref{betgam}) by induction.
\end{proof}

Lemma~\ref{lem:alfbetgam} allows the computation of the matrices $\beta_j$ to be organised using the factorisation
\begin{equation}
\label{root}
    \beta_j = \rho_j \rho_j^\rT,
\end{equation}
where, in view of (\ref{betnext}), the appropriately dimensioned (and this time not necessarily symmetric) real matrix square roots   $\rho_j$  satisfy
\begin{equation*}
\label{nextroot}
    \rho_{j+1} = \gamma_j^{-1} \gamma_{j-1} \rho_{j-1},
    \qquad
    j \> 2,
\end{equation*}
with the initial conditions
\begin{equation*}
\label{root12}
    \rho_1:= P^{-1}B,
    \qquad
    \rho_2:= \gamma_1^{-1}PC^\rT,
\end{equation*}
in accordance with (\ref{alfbetgam0}), (\ref{bet2}) and the structure of the matrices $\mho$, $\Omega$ in (\ref{BB}), (\ref{CC}). It follows from (\ref{root}) that the solution $\gamma_j$ of the appropriate ALE in (\ref{gamnext}), with $j\> 2$,  is positive definite if and only if the pair $(A,\gamma_{j-1}\rho_{j-1})$ is controllable. Therefore, the condition (\ref{gamdet}) is equivalent to
\begin{equation}
\label{good}
  (A,B),\
  (A,PC^\rT),\
  (A,\gamma_j\rho_j)\
  {\rm are\ controllable},
  \quad
  j \in \mN.
\end{equation}

\begin{theorem}
\label{th:EOEk}
The powers of the spectral density $S$ in (\ref{Sreal}), with the Hurwitz matrix $A$,    are related to the system $E$  in (\ref{Ereal}) and its dual $E^\sim$ in (\ref{Esimreal}) as
\begin{equation}
\label{EOEk}
    S^k
    =
    C
    \rprod_{j=1}^k
    (\alpha_j^\rT E^\sim)
    \beta_k
    \lprod_{j=1}^k
    (E \alpha_j)
    C^\rT,
    \qquad
    k \in \mN,
\end{equation}
where  $\rprod(\cdot)$, $\lprod(\cdot)$ are the rightwards and leftwards  ordered products, respectively. Here,
the matrices $\alpha_j, \beta_j \in \mR^{n\x n}$ are defined by (\ref{alf})--(\ref{bet0}) subject to the condition (\ref{good}). \hfill$\square$
\end{theorem}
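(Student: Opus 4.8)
The plan is to prove the equivalent statement $S^k = C\,\Phi_k\,C^\rT$ by induction on $k$, where I abbreviate the ordered ``kernel''
\begin{equation*}
    \Phi_k
    :=
    \rprod_{j=1}^k(\alpha_j^\rT E^\sim)\,\beta_k\,\lprod_{j=1}^k(E\alpha_j)
    =
    \underbrace{\alpha_1^\rT E^\sim\cdots\alpha_k^\rT E^\sim}_{L_k}\,\beta_k\,\underbrace{E\alpha_k\cdots E\alpha_1}_{R_k}.
\end{equation*}
For the base case $k=1$, I would invoke the rearrangement (\ref{SEE}), namely $S = CPE^\sim P^{-1}\mho P^{-1}EPC^\rT$, and note that the initial conditions (\ref{alfbetgam0}) give $\alpha_1 = P = \alpha_1^\rT$ (symmetric by (\ref{P})) and $\beta_1 = P^{-1}\mho P^{-1}$, so that $S = C\Phi_1 C^\rT$.

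For the inductive step, assuming $S^k = C\Phi_k C^\rT$, I would write $S^{k+1} = S^k S = C\,\Phi_k\,\Omega\,\Phi_1\,C^\rT$, using the junction $C^\rT C = \Omega$ from (\ref{CC}). The whole computation then concentrates on the middle block $R_k\,\Omega\,L_1$, where $L_1 = \alpha_1^\rT E^\sim = PE^\sim$ is the leftmost factor of $\Phi_1$. The idea is to ``push'' a single $E^\sim$ leftwards from the right end of $R_k$, one factor at a time, by repeatedly applying Lemma~\ref{lem:EOE}. The innermost junction is $E\alpha_1\,\Omega\,\alpha_1^\rT E^\sim = E(P\Omega P)E^\sim$; since $P\Omega P=(PC^\rT)(PC^\rT)^\rT\succcurlyeq 0$ and $(A,PC^\rT)$ is controllable by (\ref{good}), Lemma~\ref{lem:EOE} with $U:=P\Omega P$ and $V:=\cL_A(P\Omega P)=\gamma_1$ (cf. (\ref{gam1})) turns it into $\gamma_1 E^\sim(\gamma_1^{-1}P\Omega P\gamma_1^{-1})E\gamma_1 = \gamma_1 E^\sim\beta_2 E\gamma_1$, the last step being the definition (\ref{bet2}) of $\beta_2$.

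Iterating, the $j$th step ($2\le j\le k$) would meet the junction $E\alpha_j\gamma_{j-1}E^\sim$; since $\alpha_j\gamma_{j-1} = \gamma_{j-1}\beta_{j-1}\gamma_{j-1}=(\gamma_{j-1}\rho_{j-1})(\gamma_{j-1}\rho_{j-1})^\rT\succcurlyeq 0$ (using $\alpha_j=\gamma_{j-1}\beta_{j-1}$ from (\ref{alf}), the symmetry of Lemma~\ref{lem:alfbetgam} and the factorisation (\ref{root})), and $(A,\gamma_{j-1}\rho_{j-1})$ is controllable by (\ref{good}), Lemma~\ref{lem:EOE} with $V:=\cL_A(\gamma_{j-1}\beta_{j-1}\gamma_{j-1})=\gamma_j$ (cf. (\ref{gamnext})) converts it into $\gamma_j E^\sim(\gamma_j^{-1}\gamma_{j-1}\beta_{j-1}\gamma_{j-1}\gamma_j^{-1})E\gamma_j = \gamma_j E^\sim\beta_{j+1}E\gamma_j$, with $\beta_{j+1}$ recognised from (\ref{betnext}). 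After all $k$ factors $E\alpha_k,\ldots,E\alpha_1$ of $R_k$ have been absorbed this way, the block $R_k\,\Omega\,L_1$ would collapse to $\gamma_k E^\sim\beta_{k+1}E\gamma_k\beta_k E\gamma_{k-1}\beta_{k-1}\cdots E\gamma_2\beta_2 E\gamma_1$.

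Finally I would reassemble $\Phi_k\,\Omega\,\Phi_1 = L_k\,\beta_k\,(R_k\Omega L_1)\,\beta_1\,E\alpha_1$. On the left, $L_k\beta_k\gamma_k = L_k\alpha_{k+1}^\rT$ (since $\alpha_{k+1}=\gamma_k\beta_k$ gives $\alpha_{k+1}^\rT=\beta_k\gamma_k$ by symmetry), so $L_k\beta_k\gamma_k E^\sim\beta_{k+1} = L_{k+1}\beta_{k+1}$. On the right, the trailing chain $E\gamma_k\beta_k E\gamma_{k-1}\beta_{k-1}\cdots E\gamma_1\beta_1 E\alpha_1$ (its final $\beta_1 E\alpha_1$ supplied by $\Phi_1$) reduces, via $E\gamma_j\beta_j = E\alpha_{j+1}$ from (\ref{alf}), to $R_{k+1}=E\alpha_{k+1}\cdots E\alpha_1$. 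Hence $\Phi_k\,\Omega\,\Phi_1 = L_{k+1}\beta_{k+1}R_{k+1} = \Phi_{k+1}$, giving $S^{k+1}=C\Phi_{k+1}C^\rT$ and closing the induction. I expect the main obstacle to be the bookkeeping of this ``push-through'': at every junction one must check both that the hypotheses of Lemma~\ref{lem:EOE} hold (positive semi-definiteness via Lemma~\ref{lem:alfbetgam}, controllability via (\ref{good})) and that the matrices produced by the lemma coincide \emph{exactly} with the recursively defined $\gamma_j$ and $\beta_{j+1}$, so that the telescoping closes precisely into the claimed rightwards- and leftwards-ordered products.
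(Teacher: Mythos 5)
Your proof is correct and follows essentially the same route as the paper's: an outer induction on $k$ whose step pulls the single rightmost $E^\sim$ leftwards through the $E$-factors by repeated application of Lemma~\ref{lem:EOE}, with the matrices produced at each junction matching the recursions (\ref{alf})--(\ref{gam}) exactly as in the paper's inner induction. The only cosmetic differences are that you append the new factor $S$ in its already-rearranged form $\Phi_1$ (rather than as $E\mho E^\sim$ followed by an application of (\ref{EmhoE})), and you defer the conversions $E\gamma_j\beta_j = E\alpha_{j+1}$ to the final reassembly instead of performing them at each step of the pull-through.
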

\begin{proof}
Similar to the proof of \cite[Theorem 1]{VP_2022}, a nested induction will be used over $k\in \mN$ (which numbers the outer layer of induction) and $j= 1, \ldots, k-1$ (which numbers the inner layer, becoming inactive at $k=1$).
The validity of (\ref{EOEk}) at $k=1$,  that is,
\begin{equation}
\label{EOE1}
  S
  =
  C \alpha_1^\rT E^\sim \beta_1 E \alpha_1 C^\rT
  =
  C \gamma_0 E^\sim \beta_1 E \alpha_1 C^\rT
\end{equation}
with the matrices $\alpha_1$, $\beta_1$ given by (\ref{alfbetgam0}), follows from (\ref{SEE}) in view of the symmetry and positive definiteness of the covariance matrix $P$ in (\ref{P}) due to the controllability of $(A,B)$. In order to demonstrate  the structure of the subsequent induction steps, consider the left-hand side of (\ref{EOEk}) at $k=2$:
\begin{align}
\nonumber
  S^2
  & =
  SS\\
\nonumber
  & =
  C\alpha_1^\rT E^\sim \beta_1 E \alpha_1
  \Omega
  \gamma_0 E^\sim\beta_1 E\alpha_1C^\rT\\
\label{EOE2}
  & =
  C\alpha_1^\rT
  E^\sim
  \underbrace{\beta_1 \gamma_1}_{\alpha_2^\rT}
  E^\sim
  \underbrace{\gamma_1^{-1} \alpha_1 \Omega\gamma_0 \gamma_1^{-1}}_{\beta_2}
  E
  \underbrace{\gamma_1\beta_1}_{\alpha_2}
  E
  \alpha_1 C^\rT,
\end{align}
where use is made of (\ref{alf})--(\ref{gam}) for $j=1$ along with (\ref{alfbetgam0}), (\ref{CC}), (\ref{gam1}),   (\ref{EOE1}).
The last two equalities in (\ref{EOE2}) involve repeated application of (\ref{EOE}) from Lemma~\ref{lem:EOE}. This allows the rightmost $E^\sim$ factor to be ``pulled'' through the product of the $E$ factors (and constant matrices between them) until the $E^\sim$ factor is to the left of all the $E$ factors. The last equality in (\ref{EOE2}) also uses the symmetry of the matrices $\beta_j$, $\gamma_j$ in (\ref{betgam}) of
Lemma~\ref{lem:alfbetgam}, whereby
\begin{equation}
\label{alfT}
    \alpha_{j+1}^\rT
    =
    (\gamma_j\beta_j)^\rT
    =
    \beta_j\gamma_j,
    \qquad
    j \in \mN.
\end{equation}
Therefore, (\ref{EOE2}) establishes (\ref{EOEk}) for $k=2$. Now, suppose the representation (\ref{EOEk}) is already proved for some $k \> 2$. Then the next power of the matrix $S$ takes the form
\begin{equation}
\label{EOEnext}
    S^{k+1}
    =
    S^k S
    =
    C
    \rprod_{j=1}^k
    (\alpha_j^\rT E^\sim )
    \beta_k
    \lprod_{j=1}^k
    (E \alpha_j)
    \Omega E\mho E^\sim C^\rT ,
\end{equation}
where the rightmost $E^\sim$ factor is the only $E^\sim$ factor which is to the right of the $E$ factors. We will now use the pulling procedure, demonstrated in (\ref{EOE2}),  and prove that
\begin{equation}
\label{inner}
    \beta_k
    \lprod_{j=1}^k
    (E \alpha_j)
    \Omega
    E\mho E^\sim
    =
    \beta_k
    \lprod_{j=r}^k
    (E \alpha_j)
    \gamma_{r-1}
    E^\sim
    \beta_r
    \lprod_{j=1}^r
    (E \alpha_j)
\end{equation}
by induction over $r= 2, \ldots, k$.  The fulfillment of (\ref{inner}) for $r=2$ is verified by applying (\ref{EmhoE}) and using (\ref{alfbetgam0}), (\ref{gam1}):
\begin{align*}
    \beta_k
    \lprod_{j=1}^k
    (E \alpha_j)
    \Omega
    E\mho E^\sim
    & =
    \beta_k
    \lprod_{j=1}^k
    (E \alpha_j)
    \Omega
    P
    E^\sim
    P^{-1} \mho P^{-1}
    E
    P\\
    & =
    \beta_k
    \lprod_{j=1}^k
    (E \alpha_j)
    \Omega
    \gamma_0
    E^\sim
    \beta_1
    E
    \alpha_1\\
    & =
    \beta_k
    \lprod_{j=2}^k
    (E \alpha_j)
    E\alpha_1
    \Omega
    \gamma_0
    E^\sim
    \beta_1
    E
    \alpha_1\\
    & =
    \beta_k
    \lprod_{j=2}^k
    (E \alpha_j)
    \gamma_1
    E^\sim
    \underbrace{\gamma_1^{-1}
    \alpha_1
    \Omega
    \gamma_0
    \gamma_1^{-1}}_{\beta_2}
    E
    \underbrace{\gamma_1
    \beta_1}_{\alpha_2}
    E
    \alpha_1.
\end{align*}
Now, suppose (\ref{inner}) is already proved for some $r= 2, \ldots, k-1$. Then its validity for the next value $r+1$ is established by using (\ref{EOE}) of Lemma~\ref{lem:EOE} in combination with (\ref{alf})--(\ref{gam}) as
\begin{align*}
    \beta_k
    \lprod_{j=1}^k
    (E \alpha_j)
    \Omega
    E\mho E^\sim
    & =
    \beta_k
    \lprod_{j=r+1}^k
    (E \alpha_j)
    E \alpha_r
    \gamma_{r-1}
    E^\sim
    \beta_r
    \lprod_{j=1}^r
    (E \alpha_j)\\
    & =
    \beta_k
    \lprod_{j=r+1}^k
    (E \alpha_j)
    \gamma_r
    E^\sim
    \underbrace{\gamma_r^{-1}\alpha_r
    \gamma_{r-1}
    \gamma_r^{-1}}_{\beta_{r+1}}
    E
    \underbrace{\gamma_r
    \beta_r}_{\alpha_{r+1}}
    \lprod_{j=1}^r
    (E \alpha_j)\\
    & =
    \beta_k
    \lprod_{j=r+1}^k
    (E \alpha_j)
    \gamma_r
    E^\sim
    \beta_{r+1}
    \lprod_{j=1}^{r+1}
    (E \alpha_j).
\end{align*}
Therefore, (\ref{inner}) holds for any $r = 2, \ldots, k$, thus completing the inner layer of induction. In particular, at $r=k$, this relation takes the form
\begin{align}
\nonumber
    \beta_k
    \lprod_{j=1}^k
    (E \alpha_j)
    \Omega
    E\mho E^\sim
    & =
    \beta_k
    E
    \alpha_k\gamma_{k-1}
    E^\sim
    \beta_k
    \lprod_{j=1}^k
    (E \alpha_j)\\
\nonumber
    & =
    \underbrace{\beta_k
    \gamma_k}_{\alpha_{k+1}^\rT}
    E^\sim
    \underbrace{\gamma_k^{-1}
    \alpha_k\gamma_{k-1}
    \gamma_k^{-1}}_{\beta_{k+1}}
    E
    \underbrace{\gamma_k
    \beta_k}_{\alpha_{k+1}}
    \lprod_{j=1}^k
    (E \alpha_j)\\
\label{innerlast}
    & =
    \alpha_{k+1}^\rT
    E^\sim
    \beta_{k+1}
    \lprod_{j=1}^{k+1}
    (E \alpha_j),
\end{align}
where (\ref{EOE}) of Lemma~\ref{lem:EOE} and (\ref{alf})--(\ref{gam}) are used again along  with (\ref{alfT}). Now, substitution of (\ref{innerlast}) into (\ref{EOEnext}) leads to
\begin{align*}
    S^{k+1}
    & =
    C\rprod_{j=1}^k
    (\alpha_j^\rT E^\sim )
    \alpha_{k+1}^\rT
    E^\sim
    \beta_{k+1}
    \lprod_{j=1}^{k+1}
    (E \alpha_j)
    C^\rT\\
    & =
    C\rprod_{j=1}^{k+1}
    (\alpha_j^\rT E^\sim )
    \beta_{k+1}
    \lprod_{j=1}^{k+1}
    (E \alpha_j) C^\rT,
\end{align*}
which completes the outer layer of induction, thus proving (\ref{EOEk}) for any $k\in \mN$.
\end{proof}

For any $\sigma_1, \sigma_2, \sigma_3, \ldots \in \mR\setminus\{0\}$,   the factorisation (\ref{EOEk}) is invariant under the transformation
\begin{equation*}
\label{scale}
    \alpha_k \mapsto \frac{1}{\sigma_k} \alpha_k,
    \quad
    \rho_k \mapsto \rho_k \prod_{j=1}^k \sigma_j,
    \qquad
    k \in \mN,
\end{equation*}
which involves the square root representation of the matrices $\beta_k$ in (\ref{root}) and can be used for balancing the state-space realisations (to improve their numerical computation). 

\section{Hardy-Schatten norms and covariance-analytic functionals in state space}
\label{sec:HS}

In view of the system conjugacy
\begin{equation*}
\label{EEEdual}
        \rprod_{j=1}^k
    (\alpha_j^\rT E^\sim)
    =
    \Big(\lprod_{j=1}^k
    (E \alpha_j)\Big)^\sim,
\end{equation*}
the relation (\ref{EOEk}) can be represented as
\begin{equation}
\label{SG}
  S^k = G_k^\sim G_k,
  \qquad
  k \in \mN,
\end{equation}
in terms of the strictly proper stable systems
\begin{equation}
\label{Gk}
    G_k :=
    \rho_k^\rT
    \Big(
        \lprod_{j=1}^k
        E \alpha_j
    \Big)
    C^\rT,
    \qquad
    k \in \mN,
\end{equation}
which are assembled into
\begin{equation}
\label{G}
    G
    :=
    \begin{bmatrix}
      G_1\\
      G_2\\
      G_3\\
      \vdots
    \end{bmatrix}
    =
    \begin{bmatrix}
      \rho_1^\rT \\
      \rho_2^\rT E\alpha_2 \\
      \rho_3^\rT E\alpha_3 E\alpha_2 \\
      \vdots
    \end{bmatrix}
    E\alpha_1 C^\rT
    =
    \left[\begin{array}{cccc|c}
  A   & 0 & 0 &\ldots & \alpha_1 C^\rT\\
  \alpha_2  & A & 0 & \ldots  & 0\\
  0   & \alpha_3 & A & \ldots  & 0\\
  \ldots    &\ldots   & \ldots  & \ldots  &\ldots  \\
  \hline
  \rho_1^\rT & 0 & 0 & \ldots  & 0\\
  0 & \rho_2^\rT & 0 & \ldots  & 0\\
  0 & 0 & \rho_3^\rT & \ldots  & 0\\
  \ldots & \ldots & \ldots & \ldots  & \ldots
\end{array}\right],
\end{equation}
provided the condition (\ref{good}) is satisfied.
The system $G$ has infinite-dimensional real state-space matrices  (its output matrix is an infinite block-diagonal matrix involving the square roots from (\ref{root})),
an $\mR^p$-valued input and an $\mR^\infty $-valued output. This system is organised as an infinite cascade of systems shown in Fig.~\ref{fig:EEE}.
\begin{figure}[htbp]
\centering
\unitlength=0.75mm
\linethickness{0.5pt}
\begin{picture}(140.00,44.00)
    \put(0,7){\dashbox(135,35)[cc]{}}
    \put(3,40){\makebox(0,0)[lt]{{$G$}}}
    \put(120,27){\framebox(10,10)[cc]{$\alpha_1 C^\rT$}}
    \put(100,27){\framebox(10,10)[cc]{$E$}}
    \put(80,27){\framebox(10,10)[cc]{$\alpha_2$}}
    \put(60,27){\framebox(10,10)[cc]{$E$}}
    \put(40,27){\framebox(10,10)[cc]{$\alpha_3$}}
    \put(20,27){\framebox(10,10)[cc]{$E$}}
    \put(142,32){\vector(-1,0){12}}
    \put(120,32){\vector(-1,0){10}}
    \put(100,32){\vector(-1,0){10}}
    \put(80,32){\vector(-1,0){10}}
    \put(60,32){\vector(-1,0){10}}
    \put(40,32){\vector(-1,0){10}}
    \put(20,32){\vector(-1,0){10}}
    \put(5,32){\makebox(0,0)[cc]{{$\cdots$}}}
    \put(5,2){\makebox(0,0)[cc]{{$\cdots$}}}

    \put(95,32){\vector(0,-1){10}}
    \put(90,12){\framebox(10,10)[cc]{$\rho_1^\rT$}}
    \put(95,12){\vector(0,-1){12}}
    \put(55,32){\vector(0,-1){10}}
    \put(50,12){\framebox(10,10)[cc]{$\rho_2^\rT$}}
    \put(55,12){\vector(0,-1){12}}

    \put(15,32){\vector(0,-1){10}}
    \put(10,12){\framebox(10,10)[cc]{$\rho_3^\rT$}}
    \put(15,12){\vector(0,-1){12}}

\end{picture}
\caption{An infinite cascade of copies of the system $E$ from (\ref{Ereal}) (with the matrices $\alpha_k$ from (\ref{alf}), (\ref{alfbetgam0}) as intermediate factors)  forming the system $G$ with the state-space realization (\ref{G}).
}
\label{fig:EEE}
\end{figure}
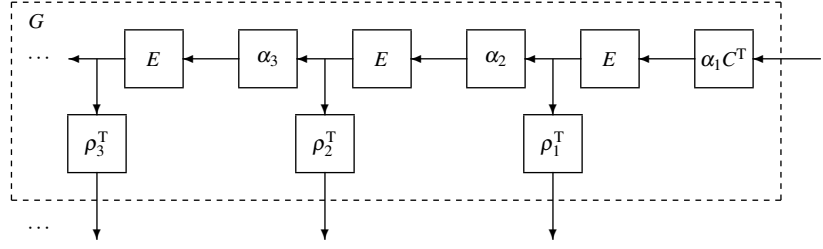
The significance of the systems $G_k$ in (\ref{Gk}) for computing the norms $\|F\|_{2k}$ is clarified by the following theorem. Its formulation employs matrices $P_{jk} \in \mR^{n\x n}$ satisfying a recurrence equation
\begin{equation}
\label{Pnext}
  P_{jk}
   =
    \left\{
    \begin{array}{lll}
    \cL_A(P_{1, k-1}\alpha_k^\rT) & {\rm if}&  j=1\\
   \cL_A(\alpha_j P_{j-1, k} +  P_{j, k-1}\alpha_k^\rT)     & {\rm if}&  1 < j< k\\
    \cL_A(\alpha_k P_{k-1, k} +  P_{k-1, k}^\rT\alpha_k^\rT) & {\rm if}&  j=k
    \end{array}
    \right.,
    \quad
    1\< j\< k,
    \quad
    k > 1,
\end{equation}
which uses the operator (\ref{cL}) and the matrices (\ref{alf}), (\ref{CC}) along with the initial condition
\begin{equation}
\label{P11}
  P_{11} := \gamma_1
\end{equation}
given by (\ref{gam1}).

\begin{theorem}
\label{th:FG}
Suppose the linear stochastic system, described by (\ref{XZ})--(\ref{Es}), with $A$ Hurwitz, satisfies the condition (\ref{good}). Then its Hardy-Schatten norms (\ref{H2k}) are related to the $\cH_2$-norms of the systems (\ref{Gk}) as
\begin{equation}
\label{H2kG}
  \|F\|_{2k}
  =
  \sqrt[k]{\|G_k\|_2}
  =
  \sqrt[2k]{
      \bra
        \beta_k, P_{kk}
    \ket_\rF},
  \qquad
  k \in \mN,
\end{equation}
with the matrices $\beta_k$, $P_{kk}$ from (\ref{bet}), (\ref{Pnext}), (\ref{P11}).
\hfill$\square$
\end{theorem}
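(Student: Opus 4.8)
The plan is to establish the two equalities in (\ref{H2kG}) separately. The first, $\|F\|_{2k}=\sqrt[k]{\|G_k\|_2}$, follows at once from the spectral factorisation (\ref{SG}): evaluating $S^k=G_k^\sim G_k$ on the imaginary axis, where $G_k^\sim(i\omega)=\wh{G_k}(\omega)^*$, gives $S(\omega)^k=\wh{G_k}(\omega)^*\wh{G_k}(\omega)$ and hence $\Tr(S(\omega)^k)=\|\wh{G_k}(\omega)\|_\rF^2$ for almost every $\omega$. Integrating over $\omega\in\mR$ and comparing the definition (\ref{H2k}) of $\|F\|_{2k}$ with the $\cH_2$-norm (\ref{H2}) applied to $G_k$ yields $\|F\|_{2k}^{2k}=\|G_k\|_2^2$, which is the first equality after extraction of roots.

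For the second equality it then suffices, by the first, to show $\|G_k\|_2^2=\bra\beta_k,P_{kk}\ket_\rF$. Reading off the realisation (\ref{Gk}), (\ref{G}), the system $G_k$ has state $(x_1,\ldots,x_k)\in\mR^{kn}$, a block lower-bidiagonal dynamics matrix $\mathcal{A}$ with $A$ on the diagonal and $\alpha_j$ in the $(j,j{-}1)$ position, an input matrix $\mathcal{B}$ whose only nonzero (top) block is $\alpha_1 C^\rT=PC^\rT$, and an output matrix $\mathcal{C}$ selecting the last state through $\rho_k^\rT$. Since $A$ is Hurwitz, so is the block-triangular $\mathcal{A}$, and the controllability Gramian $\mathcal{P}=(P_{ij})_{i,j=1}^k$ exists as the unique solution of $\mathcal{A}\mathcal{P}+\mathcal{P}\mathcal{A}^\rT+\mathcal{B}\mathcal{B}^\rT=0$. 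Because the output picks out $x_k$ only, the standard $\cH_2$-norm formula gives $\|G_k\|_2^2=\Tr(\rho_k^\rT P_{kk}\rho_k)=\bra\rho_k\rho_k^\rT,P_{kk}\ket_\rF=\bra\beta_k,P_{kk}\ket_\rF$ by (\ref{root}).

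It remains to identify this $P_{kk}$ with the output of the recurrence (\ref{Pnext}), (\ref{P11}). Writing out the $(j,k)$ block of the Lyapunov equation and solving for $P_{jk}$ by means of the operator $\cL_A$ in (\ref{cL}) gives $P_{jk}=\cL_A(\alpha_j P_{j-1,k}+P_{j,k-1}\alpha_k^\rT+\delta_{j1}\delta_{k1}\,P\Omega P)$, with index-zero terms absent and with the forcing $\mathcal{B}\mathcal{B}^\rT$ contributing the top-left block $PC^\rT CP=P\Omega P$. The three cases of (\ref{Pnext}) now fall out: the forcing survives only at $j=k=1$, giving $P_{11}=\cL_A(P\Omega P)=\gamma_1$ by (\ref{gam1}), i.e.\ (\ref{P11}); for $j=1<k$ the term $\alpha_1 P_{0,k}$ drops; and in the diagonal case $j=k$ the block $P_{k,k-1}$ is rewritten as $P_{k-1,k}^\rT$ using the symmetry $\mathcal{P}=\mathcal{P}^\rT$. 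I would also note that the blocks are independent of the cascade length, since the $(j,k)$ equation couples only blocks with indices at most $(j,k)$, so the length-$k$ Gramian restricts to that of any shorter cascade; this makes the two-parameter array $P_{jk}$ consistent and computable level by level.

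The main obstacle is the bookkeeping of the block Lyapunov equation rather than any conceptual difficulty: one must attach the coupling $\alpha_j$ to the correct off-diagonal block and, crucially, handle the diagonal case $j=k$, in which the block $P_{k,k-1}$ nominally required is not yet available at level $k$ and must be supplied through the transpose $P_{k-1,k}^\rT$. Everything else---existence and uniqueness of the Gramian, the $\cH_2$-norm formula, and the final roots---is routine given that $A$ is Hurwitz and that the factorisation of Theorem~\ref{th:EOEk} holds under the standing assumption (\ref{good}).
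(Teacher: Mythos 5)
Your proposal is correct and follows essentially the same route as the paper's own proof: the first equality via the factorisation $S^k=G_k^\sim G_k$ integrated over the imaginary axis, and the second via the block Lyapunov equation for the controllability Gramian of the cascade realisation, with the same case analysis (including the rewriting $P_{k,k-1}=P_{k-1,k}^\rT$ at $j=k$) recovering (\ref{Pnext}), (\ref{P11}). The only cosmetic difference is that you work with the length-$k$ realisation of $G_k$ alone, whereas the paper works with the stacked truncation $\cG_N$ in (\ref{GN}); your consistency observation about blocks being independent of cascade length is precisely the paper's remark that $\cP_N$ is a submatrix of $\cP_{N+1}$.
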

\begin{proof}
Integration of the transfer functions on both sides of (\ref{SG}) over the imaginary axis and a comparison with (\ref{H2k}), (\ref{H2}) (with the $\cH_2$-norm being evaluated at $G_k$) yield
\begin{align}
\nonumber
    \|F\|_{2k}^{2k}
     & =
    \frac{1}{2\pi}
    \int_{\mR}
    \Tr(S(\omega)^k)
    \rd \omega
    =
    \frac{1}{2\pi}
    \int_{\mR}
    \Tr(G_k(i\omega)^*G_k(i\omega))
    \rd \omega\\
\label{FGk}
    & =
    \frac{1}{2\pi}
    \int_{\mR}
    \|G_k(i\omega)\|_\rF^2
    \rd \omega = \|G_k\|_2^2
\end{align}
for any     $k \in \mN$,
which establishes the first equality in (\ref{H2kG}). Now, the $\cH_2$-norm of the system $G_k$ can be computed as
\begin{equation}
\label{Gk2}
    \|G_k\|_2^2
    =
    \Tr (\rho_k^\rT P_{kk} \rho_k)
    =
    \bra
        \beta_k, P_{kk}
    \ket_\rF
\end{equation}
in terms of the matrices $\beta_k$, $\rho_k$ from (\ref{bet}), (\ref{root}) and the blocks
$P_{jk}  = P_{kj}^\rT \in \mR^{n\x n}$ of the controllability Gramian
\begin{equation}
\label{cPN}
    \cP_N
    :=
    (P_{jk})_{1\< j,k\< N}
    =
    \cL_{\cA_N}(\cB_N\cB_N^\rT)
\end{equation}
of $(\cA_N, \cB_N)$. Here,
$\cA_N \in \mR^{nN\x nN}$, $\cB_N\in \mR^{nN\x p}$, $\cC_N\in \mR^{(m + p(N-1))\x nN}$  are the state-space matrices of the system
\begin{equation}
\label{GN}
    \cG_N
    :=
    \begin{bmatrix}
      G_1\\
      \vdots\\
      G_N
    \end{bmatrix}
    =
    \left[
    \begin{array}{c|c}
    \cA_N & \cB_N\\
      \hline
      \cC_N &  0
    \end{array}
    \right],
\end{equation}
obtained by truncating the system $G$ in (\ref{G}), so that
\begin{equation}
\label{cABCN}
    \cA_N
    :=
    \begin{bmatrix}
        A   & 0 & 0 &\ldots &\ldots & 0\\
        \alpha_2  & A & 0 & \ldots  &\ldots & 0\\
        0   & \alpha_3 & A & \ldots  &\ldots & 0\\
        \ldots    &\ldots   & \ldots  & \ldots  &\ldots &  \ldots\\
        \ldots    &\ldots   & \ldots  & \ldots  &A &  0\\
        \ldots    &\ldots   & \ldots  & \ldots  &\alpha_N &  A
    \end{bmatrix},
    \quad
    \cB_N
    :=
    \begin{bmatrix}
      \alpha_1 C^\rT\\
      0\\
      \vdots\\
      0
    \end{bmatrix},
    \quad
    \cC_N := \diag_{1\< k \< N} (\rho_k^\rT),
\end{equation}
with $\cA_N$ inheriting the Hurwitz property from $A$. In (\ref{cPN}), use is made of the property that the matrix  $\cP_N$ is a submatrix of $\cP_{N+1}$ for any $N \in \mN$   due to the cascade structure of the system $G$ (see Fig.~\ref{fig:EEE}).   Also, (\ref{Gk2}) makes advantage of the block-diagonal structure of the matrix $\cC_N$ in (\ref{cABCN}), whereby the $k$th diagonal block of the matrix $\cC_N \cP_N \cC_N^\rT$ reduces to     $(\cC_N \cP_N  \cC_N^\rT)_{kk} = \rho_k^\rT P_{kk} \rho_k$.
 The block lower triangular structure of $\cA_N$ and the sparsity of $\cB_N$ in (\ref{cABCN}) allow the $(j,k)$th block of the ALE in (\ref{cPN}) to be represented as
\begin{align}
\nonumber
    0 &=
    (\cA_N \cP_N + \cP_N \cA_N^\rT  + \cB_N \cB_N^\rT)_{jk}\\
\nonumber
    & =
    (\cA_N \cP_N)_{jk} + ((\cA_N \cP_N)_{kj})^\rT + (\cB_N \cB_N^\rT)_{jk}\\
\label{Pjk}
    & =
    AP_{jk} + P_{jk}A^\rT + \alpha_j P_{j-1, k} +  P_{j, k-1}\alpha_k^\rT
    +
    \delta_{j1} \delta_{k1} P\Omega P,
    \qquad
    1\< j,k\< N,
\end{align}
where the convention $P_{0k}=0$,  $P_{k0}=0$ is used along with (\ref{alfbetgam0}), (\ref{CC}). It follows from (\ref{Pjk}) that
\begin{equation}
\label{Pjk1}
  P_{jk}
   =
   \cL_A(\alpha_j P_{j-1, k} +  P_{j, k-1}\alpha_k^\rT
    +
    \delta_{j1} \delta_{k1} P\Omega P).
\end{equation}
In the case of $j=k=1$, this relation reduces to
$
  P_{11}
   =
   \cL_A(P\Omega P)
$,
and hence, $P_{11}$ coincides with the matrix $\gamma_1$ in (\ref{gam1}), in accordance with (\ref{P11}). For any $k>1$, the recurrence equation (\ref{Pnext}) is obtained by considering (\ref{Pjk1}) for $j=1$, $j = 2, \ldots, k-1$ and $j=k$, which, together with (\ref{Gk2}), completes the proof of the second equality in (\ref{H2kG}).
\end{proof}

In combination with (\ref{FGk}), the representation (\ref{JHS}) of the covariance-analytic functional (\ref{phiFF}) leads to
\begin{equation}
\label{JHSG}
  J_\phi(F)
  =
  \sum_{k=1}^{+\infty}
  \phi_k \|G_k\|_2^2,
\end{equation}
thus extending the state-space calculations
from monomials to analytic functions of  the spectral density $S$. In a particular yet practically important case, where all the derivatives of $\phi$ in (\ref{JHS})  are nonnegative
\begin{equation}
\label{phipos}
    \phi_k \> 0,
     \qquad
     k \in \mN,
\end{equation}
which holds, for example, in the risk-sensitive setting\footnote{and any other control formulation which penalises the Hardy-Schatten norms with positive weights}  (\ref{QEphi}), the functional (\ref{JHSG}) is related by
\begin{equation}
\label{JH}
  J_\phi(F)
  =
  \sum_{k=1}^{+\infty}
  \|H_k\|_2^2
  =
  \|H\|_2^2
\end{equation}
to the $\cH_2$-norms of the systems
\begin{equation*}
\label{Hk}
  H_k:= \sqrt{\phi_k} G_k
\end{equation*}
which comprise the system
\begin{equation}
\label{GH}
    H
    :=
    \begin{bmatrix}
      H_1\\
      H_2\\
      H_3\\
      \vdots
    \end{bmatrix}
    =
    \left[\begin{array}{cccc|c}
  A   & 0 & 0 &\ldots & \alpha_1 C^\rT\\
  \alpha_2  & A & 0 & \ldots  & 0\\
  0   & \alpha_3 & A & \ldots  & 0\\
  \ldots    &\ldots   & \ldots  & \ldots  &\ldots  \\
  \hline
  \sqrt{\phi_1}\rho_1^\rT & 0 & 0 & \ldots  & 0\\
  0 & \sqrt{\phi_2}\rho_2^\rT & 0 & \ldots  & 0\\
  0 & 0 & \sqrt{\phi_3}\rho_3^\rT & \ldots  & 0\\
  \ldots & \ldots & \ldots & \ldots  & \ldots
\end{array}\right],
\end{equation}
whose state-space realisation is obtained from (\ref{G}). The representation (\ref{JH}) of the covariance-analytic cost for the finite-dimensional system $F$ in (\ref{F0}) in terms of the   LQG cost of an auxiliary system $H$, associated with $F$ by (\ref{GH}),  can be regarded as a system theoretic counterpart of the sum-of-squares (SOS) structures from polynomial  optimization \cite{L_2001,P_2000}.

The practical computation of the series (\ref{JHSG}) (or (\ref{JH}) under the condition (\ref{phipos})) can be carried out by truncating it to the first $N$ terms, with $N$ being  moderately large in the case of fast decaying coefficients $\phi_k$. As discussed in the proof of Theorem~\ref{th:FG}, in addition to solving $N$ ALEs of order $n$ for finding the auxiliary matrices $\alpha_1, \ldots, \alpha_N$, $\beta_1, \ldots, \beta_N$ in (\ref{alf})--(\ref{alfbetgam0}), this involves one ALE of order $nN$ (or  $\frac{1}{2}N(N+1)$ ALEs of order $n$ in (\ref{Pnext}), (\ref{P11})) for computing the controllability Gramian $\cP_N$ in (\ref{cPN}) for the system $\cG_N$ in (\ref{GN}).

\section{An alternative  computation with differentiating a Riccati equation}
\label{sec:diff}

For comparison with the results of Section~\ref{sec:HS}, we will now discuss  a different approach to the state-space computation of the covariance-analytic functional and related Hardy-Schatten norms.
To this end, the following lemma rephrases \cite[Proposition 6.3.1, pp. 66--68]{MG_1990} (see also \cite{BV_1985}) in application to the risk-sensitive cost (\ref{QE}) and is given here for completeness.

\begin{lemma}
\label{lem:QEF}
Suppose the  risk sensitivity parameter $\theta>0$ satisfies (\ref{tgood}) with the stable transfer function (\ref{F0}). Then the risk-sensitive cost (\ref{QE}) for the  stationary Gaussian process $Z$ in (\ref{XZ}) can be  computed  as
\begin{equation}
\label{QEFrate}
    \Xi(\theta)
    =
    \frac{1}{2}
    \bra
        \mho,
        \Psi(\theta)
    \ket_\rF.
\end{equation}
Here, $\Psi(\theta)\in \mS_n$ is the unique stabilising solution of the algebraic Riccati equation (ARE)
\begin{equation}
\label{ARE}
    A^\rT \Psi(\theta) + \Psi(\theta)A
   +
   \theta \Omega
   +
    \Psi(\theta) \mho \Psi(\theta) = 0
\end{equation}
in the sense that the matrix
\begin{equation}
\label{Ups}
  \Ups(\theta)
  :=
  A + \mho \Psi(\theta)
\end{equation}
is Hurwitz, with the matrices $\mho$, $\Omega$ given by (\ref{BB}), (\ref{CC}).
\hfill$\blacksquare$
\end{lemma}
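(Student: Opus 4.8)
The plan is to evaluate the frequency-domain integral in (\ref{QE}) by building an explicit \emph{outer} spectral factor of $I_p-\theta S$ from the stabilising Riccati solution $\Psi(\theta)$, and then extracting the value of the integral from the behaviour of this factor at infinity. This is the classical spectral-factorisation route behind \cite[Proposition 6.3.1]{MG_1990}.

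First I would reduce the $p\times p$ determinant to an $m\times m$ one. Since $S=\wh{F}\wh{F}^*$ by (\ref{S}), Sylvester's determinant identity gives $\det(I_p-\theta S(\omega))=\det(I_m-\theta\wh{F}(\omega)^*\wh{F}(\omega))$, and as rational functions $F^\sim F=B^\rT E^\sim\Omega E B$ in the notation of (\ref{Fsim}), (\ref{Esim}), (\ref{CC}). The core step is then the factorisation
\begin{equation*}
    I_m-\theta B^\rT E^\sim\Omega E B=L^\sim L,
    \qquad
    L(s):=I_m-B^\rT\Psi(\theta)E(s)B,
\end{equation*}
which I would verify as an identity of rational matrix functions. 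Expanding $L^\sim L$ leaves the cross terms $B^\rT\Psi EB+B^\rT E^\sim\Psi B$ and the quadratic term $B^\rT E^\sim\Psi\mho\Psi EB$; to collapse the cross terms into a single $E^\sim(\cdot)E$ expression I would use the resolvent identities $AE(s)=sE(s)-I_n$ and $E^\sim(s)A^\rT=-sE^\sim(s)-I_n$, which give $E^\sim(A^\rT\Psi+\Psi A)E=-\Psi E-E^\sim\Psi$. Substituting the ARE (\ref{ARE}) in the form $A^\rT\Psi+\Psi A=-\theta\Omega-\Psi\mho\Psi$ then reproduces exactly $\theta B^\rT E^\sim\Omega EB$, so the Riccati equation is precisely the algebraic condition that closes the factorisation.

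Next I would record the structure of $L$. Its poles are the eigenvalues of the Hurwitz matrix $A$, while from the realisation $L=\left[\begin{smallmatrix}A&B\\-B^\rT\Psi&I_m\end{smallmatrix}\right]$ its zeros are the eigenvalues of $A+\mho\Psi=\Ups(\theta)$ in (\ref{Ups}), which is Hurwitz by the stabilising property. Hence $L$ is biproper and outer: $\det L$ is holomorphic and zero-free in the closed right half-plane, with $L(\infty)=I_m$. Writing $\det(I_p-\theta S)=|\det L(i\omega)|^2$, the cost becomes $\Xi(\theta)=-\frac{1}{2\pi}\int_\mR\Re\ln\det L(i\omega)\rd\omega$.

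Finally I would evaluate this integral by a residue-at-infinity argument. From $E(s)=\frac1s I_n+O(s^{-2})$ one obtains $\det L(s)=1-\frac1s\Tr(\mho\Psi(\theta))+O(s^{-2})$, hence $\ln\det L(s)=-\frac1s\Tr(\mho\Psi(\theta))+O(s^{-2})$; closing the contour by a large semicircle in the right half-plane, where $\ln\det L$ is analytic, yields $\int_\mR\ln\det L(i\omega)\rd\omega=-\pi\Tr(\mho\Psi(\theta))$, whose real part gives (\ref{QEFrate}) after division by $-2\pi$. The boundary case $\Xi(0)=0$ is consistent with $\Psi(0)=0$, the stabilising solution of (\ref{ARE}) at $\theta=0$. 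I expect the main obstacle to be the algebraic verification that $L^\sim L=I_m-\theta F^\sim F$ --- that is, recognising the ARE as the exact closure condition via the resolvent identities --- together with the routine but careful justification of the decay and contour estimates that turn the frequency integral into the trace $\tfrac12\bra\mho,\Psi(\theta)\ket_\rF$.
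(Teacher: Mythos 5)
Your proposal is correct, and it fills a gap the paper deliberately leaves open: Lemma~\ref{lem:QEF} is stated without proof, with a citation to \cite[Proposition 6.3.1]{MG_1990} (see also \cite{BV_1985}) standing in for the argument, so there is no in-paper derivation to compare against. What you have written is a self-contained reconstruction of the classical spectral-factorisation proof behind that citation, and its key steps check out. The algebraic core is right: with $L(s)=I_m-B^\rT\Psi(\theta)E(s)B$ one gets $L^\sim L=I_m-B^\rT(\Psi E+E^\sim\Psi)B+B^\rT E^\sim\Psi\mho\Psi EB$, the resolvent identities give $\Psi E+E^\sim\Psi=E^\sim(\theta\Omega+\Psi\mho\Psi)E$ once the ARE (\ref{ARE}) is substituted, and this collapses $L^\sim L$ to $I_m-\theta F^\sim F$; Sylvester's identity transfers this to $\det(I_p-\theta S(\omega))=|\det L(i\omega)|^2$; and since $\det L(s)=\det(sI_n-\Ups(\theta))/\det(sI_n-A)$ with both $A$ and $\Ups(\theta)$ Hurwitz, $\ln\det L$ is analytic in a neighbourhood of the closed right half-plane, so the residue-at-infinity evaluation via $\det L(s)=1-\Tr(\mho\Psi(\theta))/s+O(s^{-2})$ is legitimate and yields $\frac{1}{2}\bra\mho,\Psi(\theta)\ket_\rF$ with the correct sign. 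Two minor points should be made explicit in a polished write-up. First, your argument takes as given the existence and uniqueness of the stabilising solution $\Psi(\theta)$ under (\ref{tgood}); that existence claim is part of what the cited bounded-real-lemma-type result in \cite{MG_1990} supplies, and your proof verifies the cost formula given $\Psi(\theta)$ rather than re-deriving it (this is consistent with how the lemma is phrased, but worth stating). Second, $\ln\det L(i\omega)$ decays only like $1/\omega$, so $\int_\mR\ln\det L(i\omega)\,\rd\omega$ exists only as a principal value, which is what the contour argument computes; this is harmless because the $\Tr(\mho\Psi(\theta))/(i\omega)$ term is purely imaginary, whence $\Re\ln\det L(i\omega)=O(\omega^{-2})$ and the integral actually defining $\Xi(\theta)$ converges absolutely---but the passage from the complex contour identity to its real part should be stated in that order.
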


Since the matrix $A$ is Hurwitz,   the stabilising solution $\Psi(\theta)$ of the $\theta$-dependent ARE (\ref{ARE}) satisfies
\begin{equation}
\label{R0}
  \Psi(0) = 0.
\end{equation}
Also, $\Psi(\theta)$ is infinitely differentiable in $\theta$ over the interval (\ref{tgood}). In view of (\ref{FXi}), the Hardy-Schatten norms can be recovered from the state-space representation (\ref{QEFrate}) of the risk-sensitive cost as
\begin{equation}
\label{FXi1}
    \|F\|_{2k}^{2k}
    =
    \frac{1}{(k-1)!}
    \bra
        \mho,
        \Psi_k
    \ket_\rF,
    \qquad
    \Psi_k:= \Psi^{(k)}(0),
    \qquad
    k \in \mN.
\end{equation}
By substituting (\ref{FXi1}) into (\ref{JHS}), the
covariance-analytic functional in (\ref{phiFF}) takes the form
\begin{equation}
\label{JR}
    J_\phi(F)
    =
    \Big\bra
        \mho,\,
    \sum_{k=1}^{+\infty}
        \frac{\phi_k}{(k-1)!}
        \Psi_k
    \Big\ket_\rF.
\end{equation}
This alternative way for computing the cost $J_\phi(F)$ and the $\cH_{2k}$-norms   in state space  involves the derivatives $\Psi^{(k)}$ of the stabilising solution of (\ref{ARE}), which can be recursively found as follows.

\begin{theorem}
\label{th:diff}
Under the conditions of Lemma~\ref{lem:QEF}, the derivatives $\Psi_k$  of the stabilising solution $\Psi$ of the ARE (\ref{ARE}) in (\ref{FXi1}) satisfy a bilinear recurrence equation
\begin{equation}
\label{Psik}
  \Psi_k = \cL_{A^\rT}(\Omega_k),
  \qquad
  \Omega_k
  :=
  \delta_{k1}\Omega
  +
      \sum_{j=1}^{k-1}
    \binom{k}{j}
    \Psi_j \mho \Psi_{k-j},
    \qquad
    k \in \mN,
\end{equation}
where the operator (\ref{cL}) is used along with the matrices $\mho$, $\Omega$ from (\ref{BB}), (\ref{CC}) and the binomial coefficients.
\hfill$\square$
\end{theorem}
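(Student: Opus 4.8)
The plan is to obtain the recurrence by repeatedly differentiating the ARE (\ref{ARE}) with respect to the risk sensitivity parameter $\theta$ and then evaluating at $\theta = 0$, where $\Psi(0) = 0$ by (\ref{R0}). Since $\Psi$ is infinitely differentiable in $\theta$ over the interval (\ref{tgood}), as noted after (\ref{R0}), the derivatives $\Psi_k = \Psi^{(k)}(0)$ are well defined and the identity (\ref{ARE}) may be differentiated termwise to any order.

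First I would apply $\d_\theta^k$ to (\ref{ARE}) for a fixed $k \in \mN$. The two linear terms yield $A^\rT \Psi^{(k)} + \Psi^{(k)} A$, since $A$, $\mho$ and $\Omega$ are independent of $\theta$. The forcing term $\theta \Omega$ is affine in $\theta$, so $\d_\theta^k(\theta \Omega) = \delta_{k1}\Omega$. The quadratic term $\Psi \mho \Psi$ is treated by the Leibniz rule applied to the two $\theta$-dependent factors $\Psi$ (keeping $\mho$ constant), which gives
\[
    \d_\theta^k (\Psi \mho \Psi)
    =
    \sum_{j=0}^{k}
    \binom{k}{j}
    \Psi^{(j)} \mho \Psi^{(k-j)}.
\]

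Next I would set $\theta = 0$. The endpoint summands $j=0$ and $j=k$ of the Leibniz expansion carry the factor $\Psi^{(0)}(0) = \Psi(0) = 0$ and therefore vanish, leaving exactly $\sum_{j=1}^{k-1}\binom{k}{j}\Psi_j \mho \Psi_{k-j}$. Collecting all contributions, the $k$th derivative of (\ref{ARE}) at the origin reduces to the linear Lyapunov equation
\[
    A^\rT \Psi_k + \Psi_k A + \Omega_k = 0,
    \qquad
    \Omega_k
    :=
    \delta_{k1}\Omega
    +
    \sum_{j=1}^{k-1}
    \binom{k}{j}
    \Psi_j \mho \Psi_{k-j}.
\]
Because $A$ is Hurwitz, this ALE has the unique solution $\Psi_k = \cL_{A^\rT}(\Omega_k)$ in terms of the operator (\ref{cL}), which establishes (\ref{Psik}). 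As $\Omega_k$ depends only on $\Psi_1, \ldots, \Psi_{k-1}$ (with $\Omega_1 = \Omega$), the formula is a genuine recurrence.

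I do not anticipate a deep obstacle; the argument is a direct differentiation of the ARE. The step requiring the most care is the quadratic term: one must track the binomial weights in the Leibniz sum correctly and verify that precisely the two endpoint terms drop out at $\theta = 0$ through $\Psi(0) = 0$, which is what yields the symmetric convolution-type expression for $\Omega_k$. As a consistency check (not a needed step), each $\Psi_j$ inherits symmetry from $\Psi(\theta) \in \mS_n$, and the $j \leftrightarrow k-j$ pairing renders $\Omega_k$ symmetric, so that $\Psi_k = \cL_{A^\rT}(\Omega_k) \in \mS_n$.
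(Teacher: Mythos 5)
Your proof is correct, and it takes a genuinely more direct route than the paper's. The paper proceeds by induction on $k$: it maintains, for \emph{all} $\theta$ in the interval (\ref{tgood}), the representation $\Psi^{(k)}(\theta)=\cL_{\Ups(\theta)^\rT}(\Theta_k(\theta))$ in terms of the closed-loop Hurwitz matrix $\Ups(\theta)=A+\mho\Psi(\theta)$ from (\ref{Ups}), differentiates the resulting ALE at general $\theta$ at each induction step, recombines the binomial coefficients by hand via the Pascal triangle identity, and only at the very end sets $\theta=0$, where $\Ups(0)=A$. You instead apply the $k$-fold Leibniz rule to the ARE in one stroke and evaluate at $\theta=0$ immediately, so the binomial weights appear automatically and the two endpoint terms $j=0$, $j=k$ are killed by $\Psi(0)=0$; the closed-loop matrix $\Ups$ never enters, and a single ALE with the Hurwitz matrix $A$ finishes the argument. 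What your argument gives up is the paper's stronger intermediate result: the $\theta$-dependent formula $\Psi^{(k)}(\theta)=\cL_{\Ups(\theta)^\rT}(\Theta_k(\theta))$ yields a recurrence for the Taylor coefficients of $\Psi$ around \emph{any} point of the interval (\ref{tgood}), not just the origin, which could matter if one wanted expansions of the risk-sensitive cost about a nonzero $\theta$; for the theorem as stated, however, your shorter derivation is fully sufficient, and your closing observation that symmetry of $\Omega_k$ (hence of $\Psi_k$) follows from the $j\leftrightarrow k-j$ pairing is a sound sanity check.
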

\begin{proof}
Since the matrices $A$, $\mho$, $\Omega$ are constant, the differentiation of both sides of the ARE (\ref{ARE}) in $\theta$ yields
\begin{equation*}
\label{ARE'}
    0 =
    A^\rT \Psi' + \Psi'A
   +
   \Omega
   +
    \Psi'\mho \Psi + \Psi\mho \Psi'
    =
    \Ups^\rT \Psi' + \Psi' \Ups + \Omega,
\end{equation*}
and hence,
\begin{equation}
\label{R'}
  \Psi' = \cL_{\Ups^\rT}(\Omega),
\end{equation}
where the operator (\ref{cL}) is used along with the Hurwitz matrix $\Ups$ from (\ref{Ups}), and the argument $\theta$ of $\Psi(\theta)$, $\Ups(\theta)$ is omitted for brevity.  Now, extending (\ref{R'}), suppose it is already proved for some $k \in \mN$ that
\begin{equation}
\label{Rk1}
    \Psi^{(k)} = \cL_{\Ups^\rT}(\Theta_k),
    \qquad
    \Theta_k(\theta)
  :=
  \delta_{k1}\Omega
  +
      \sum_{j=1}^{k-1}
    \binom{k}{j}
    \Psi^{(j)} \mho \Psi^{(k-j)}
\end{equation}
for any $\theta$ satisfying (\ref{tgood}) (note that in the case of $k=1$, the sum vanishes:  $\sum_{j=1}^0 = 0$).
Since $\Theta_k$ in (\ref{Rk1}) and $\Ups$ in (\ref{Ups}) inherit smoothness from $\Psi$,  with
$    \Ups' = \mho \Psi'
$,
then by differentiating the ALE
$$
    \Ups^\rT \Psi^{(k)}+\Psi^{(k)}\Ups + \Theta_k = 0
$$
with respect to $\theta$, it follows that
\begin{align*}
    0 & = (\Ups^\rT \Psi^{(k)}+\Psi^{(k)}\Ups + \Theta_k)' \\
    & =
    \Ups^\rT \Psi^{(k+1)}+\Psi^{(k+1)}\Ups + \Psi^{(k)}\mho \Psi' + \Psi' \mho \Psi^{(k)} + \Theta_k',
\end{align*}
so that
\begin{equation}
\label{Rnext}
    \Psi^{(k+1)} = \cL_{\Ups^\rT}(\Theta_{k+1}),
    \qquad
    \Theta_{k+1} := \Psi^{(k)}\mho \Psi' + \Psi' \mho \Psi^{(k)} + \Theta_k'.
\end{equation}
Substitution of the matrix $\Theta_k$ from (\ref{Rk1}) into the second equality in (\ref{Rnext}) leads to
\begin{align}
  \nonumber
  & \Theta_{k+1}
  =
   \Psi^{(k)}\mho \Psi' + \Psi' \mho \Psi^{(k)}
   +
  \sum_{j=1}^{k-1}
    \binom{k}{j}
    (\Psi^{(j+1)} \mho \Psi^{(k-j)}+ \Psi^{(j)} \mho \Psi^{(k-j+1)})\\
  \nonumber
    & =
   \Psi^{(k)}\mho \Psi' + \Psi' \mho \Psi^{(k)}
   +
     \sum_{j=2}^{k}
    \binom{k}{j-1}
    \Psi^{(j)} \mho \Psi^{(k+1-j)}
    +
  \sum_{j=1}^{k-1}
    \binom{k}{j}
    \Psi^{(j)} \mho \Psi^{(k+1-j)}        \\
  \nonumber
    & =
  \sum_{j=1}^{k}
  \Big(
      \binom{k}{j-1}
      +
    \binom{k}{j}
  \Big)
    \Psi^{(j)} \mho \Psi^{(k+1-j)}    \\
\label{Tnext}
    & =
  \sum_{j=1}^{k}
    \binom{k+1}{j}
    \Psi^{(j)} \mho \Psi^{(k+1-j)},
\end{align}
where the last equality uses the Pascal triangle identity for the binomial coefficients. Together with the first equality in (\ref{Rnext}), the relation (\ref{Tnext}) provides the induction step which proves (\ref{Rk1}) for any $k \in \mN$. The recurrence equation (\ref{Psik}) can now be obtained by considering (\ref{Rk1}) at $\theta= 0$  and using the property
$$
    \Ups(0) = A + \mho \Psi(0) = A,
$$
which follows from (\ref{Ups}), (\ref{R0}) and leads to $\Omega_k  = \Theta_k(0)$. It now remains to note that, according to (\ref{Psik}),     for any $k>1$, the matrix $\Psi_k$ depends on $\Psi_1, \ldots, \Psi_{k-1}$ in a bilinear fashion.
\end{proof}

Since the matrix  $\Omega_1$ in (\ref{Psik}) with $k=1$ reduces to $\Omega$ in (\ref{CC}), the matrix $\Psi_1$ takes the form
\begin{equation*}
\label{R1}
  \Psi_1 = \cL_{A^\rT}(\Omega),
\end{equation*}
and,  in view of (\ref{cL}),  coincides with the observability Gramian $Q$ of the pair $(A,C)$ satisfying the ALE
\begin{equation*}
\label{QALE}
    A^\rT Q + Q A + \Omega = 0,
\end{equation*}
which is dual to the ALE (\ref{PALE}). Accordingly, the dual form of the relations (\ref{QEFrate}), (\ref{ARE}) of Lemma~\ref{lem:QEF} and (\ref{Psik}) of Theorem~\ref{th:diff} is obtained by replacing the matrix triple $(A,B,C)$ with $(A^\rT, C^\rT, B^\rT)$  and is given by
\begin{equation*}
\label{QEFrate1}
    \Xi(\theta)
    =
    \frac{1}{2}
    \bra
        \Omega,
        \Phi(\theta)
    \ket_\rF,
\end{equation*}
where $\Phi(\theta)\in \mS_n$ is the unique stabilising (with $A + \Phi(\theta) \Omega$ Hurwitz) solution of the ARE
\begin{equation*}
\label{ARE1}
    A \Phi(\theta) + \Phi(\theta)A^\rT
   +
   \theta \mho
   +
    \Phi(\theta) \Omega \Phi(\theta) = 0,
\end{equation*}
whose derivatives satisfy the recurrence equation
\begin{equation}
\label{Phik}
  \Phi_k := \Phi^{(k)}(0) =  \cL_{A}(\mho_k),
  \qquad
  \mho_k
  :=
  \delta_{k1}\mho
  +
      \sum_{j=1}^{k-1}
    \binom{k}{j}
    \Phi_j \Omega \Phi_{k-j},
    \qquad
    k \in \mN,
\end{equation}
and give rise to the corresponding dual representations for the Hardy-Schatten norms in (\ref{FXi1}),
\begin{equation}
\label{FXi2}
    \|F\|_{2k}^{2k}
    =
    \frac{1}{(k-1)!}
    \bra
        \Omega,
        \Phi_k
    \ket_\rF,
    \qquad
    k \in \mN,
\end{equation}
and the covariance analytic functional in (\ref{JR}):
\begin{equation*}
\label{JR1}
    J_\phi(F)
    =
    \Big\bra
        \Omega,\,
    \sum_{k=1}^{+\infty}
        \frac{\phi_k}{(k-1)!}
        \Phi_k
    \Big\ket_\rF.
\end{equation*}
By a similar reasoning, the matrix  $\mho_1$ in (\ref{Phik}) at $k=1$ reduces to $\mho$ in (\ref{BB}), so that the matrix $\Phi_1$ takes the form
\begin{equation*}
\label{Phi1}
  \Phi_1 = \cL_A(\mho),
\end{equation*}
and, in view of (\ref{P}),  coincides with the controllability Gramian $P$ of the pair $(A,B)$. Therefore, the role of the matrices $\Phi_k$, $\Psi_k$ in computing the higher-order $\cH_{2k}$-norm  $\|F\|_{2k}$ (with $k>1$) is similar to that of the Gramians $P= \Phi_1$, $Q= \Psi_1$ for the mean square cost
$$
    \|F\|_2^2
    =
    \bra \Omega, P\ket_\rF
    =
    \bra \mho, Q\ket_\rF.
$$
For example, in accordance with (\ref{FXi1}), (\ref{FXi2}), the $\cH_4$-norm is related to $\Phi_2$, $\Psi_2$ as
$$
    \|F\|_4^4
    =
    \bra \Omega, \Phi_2\ket_\rF
    =
    \bra \mho, \Psi_2\ket_\rF,
$$
where
$$
    \Phi_2 = 2\cL_A(P\Omega P),
    \qquad
    \Psi_2 = 2\cL_{A^\rT}(Q\mho Q)
$$
(cf. \cite[p. 2386]{VP_2010b}),  so that $\frac{1}{2}\Phi_2$, $\frac{1}{2}\Psi_2$ are the controllability and observability Gramians for a subsidiary system
$$
      \left[
    \begin{array}{c|c}
    A & PC^\rT\\
      \hline
      B^{\rT}Q &  0
    \end{array}
    \right],
$$
with $\frac{1}{2}\Phi_2 = \gamma_1$ in view of (\ref{gam1}).
By analogy with the Gramians $P$, $Q$, the matrices $\Phi_k$, $\Psi_k$ are referred to as the controllability and observability Schattenians \cite{VP_2010b} of order $k$ for the triple $(A,B,C)$, respectively.

In combination with (\ref{FXi1}),  Theorem~\ref{th:diff} (or the dual version of (\ref{Psik}) in (\ref{Phik})) allows the first $N$ Hardy-Schatten norms $\|F\|_2, \|F\|_4, \ldots, \|F\|_{2N}$  to be computed by solving $N$ ALEs of order $n$, which is simpler than the approach of Section~\ref{sec:HS} based on the adaptation  of the system transposition technique in  Section~\ref{sec:inf}. However, the latter approach contributes a special class of spectral factorizations, which overcomes the noncommutativity  of transfer matrices  in cascaded linear systems and is of theoretical interest in its own right.

\section{A numerical example of computing the Hardy-Schatten norms}
\label{sec:num}

As an illustration, consider a system (\ref{Ereal})  with input, state and output dimensions $m = 2$, $n = 4$, $p = 3$ and the state-space realization
$$
    F
    =
      \left[
    \begin{array}{c|c}
    A & B\\
      \hline
      C &  0
    \end{array}
    \right]
    =
      \left[
    {\small\begin{array}{cccc|cc}
   -1.2018 &  -0.5741 &  -1.1385 &  -0.2364 &   0.1024 &  -2.0210\\
   -0.2060 &  -0.8052 &  -0.4136 &  -0.5226 &   1.7795 &  -1.6236\\
    0.4992 &   0.1745 &   0.0814 &   1.6367 &  -0.3501 &  -1.4740\\
    0.9253 &   0.6402 &  -0.4446 &  -1.8185 &   0.2054 &   1.5669\\
    \hline
    0.7726 &   1.9030 &   0.8135 &   0.3452 &        0 &        0\\
   -0.5229 &  -0.8653 &  -1.2256 &   0.6073 &        0 &        0\\
    0.2605 &   0.4777 &  -1.2256 &  -1.4299 &        0 &        0
    \end{array}}
    \right],
$$
whose matrices $A$, $B$, $C$ are generated randomly, with $A$ Hurwitz (its
spectrum is $\{-0.5409 \pm 1.2631i,   -1.9875,   -0.6748\}$).  Such a model can represent a  mechanical system with two degrees of freedom, resulting from an internally stable interconnection of a plant and a controller, each organised as a mass-spring-damper system  with one degree of freedom and subject to an external random force. The results of the state-space computation of the first ten Hardy-Schatten norms for this system by using Theorem~\ref{th:FG} are shown in Fig.~\ref{fig:h2k}.
\begin{figure}[htbp]
\begin{center}
\includegraphics[width=8cm]{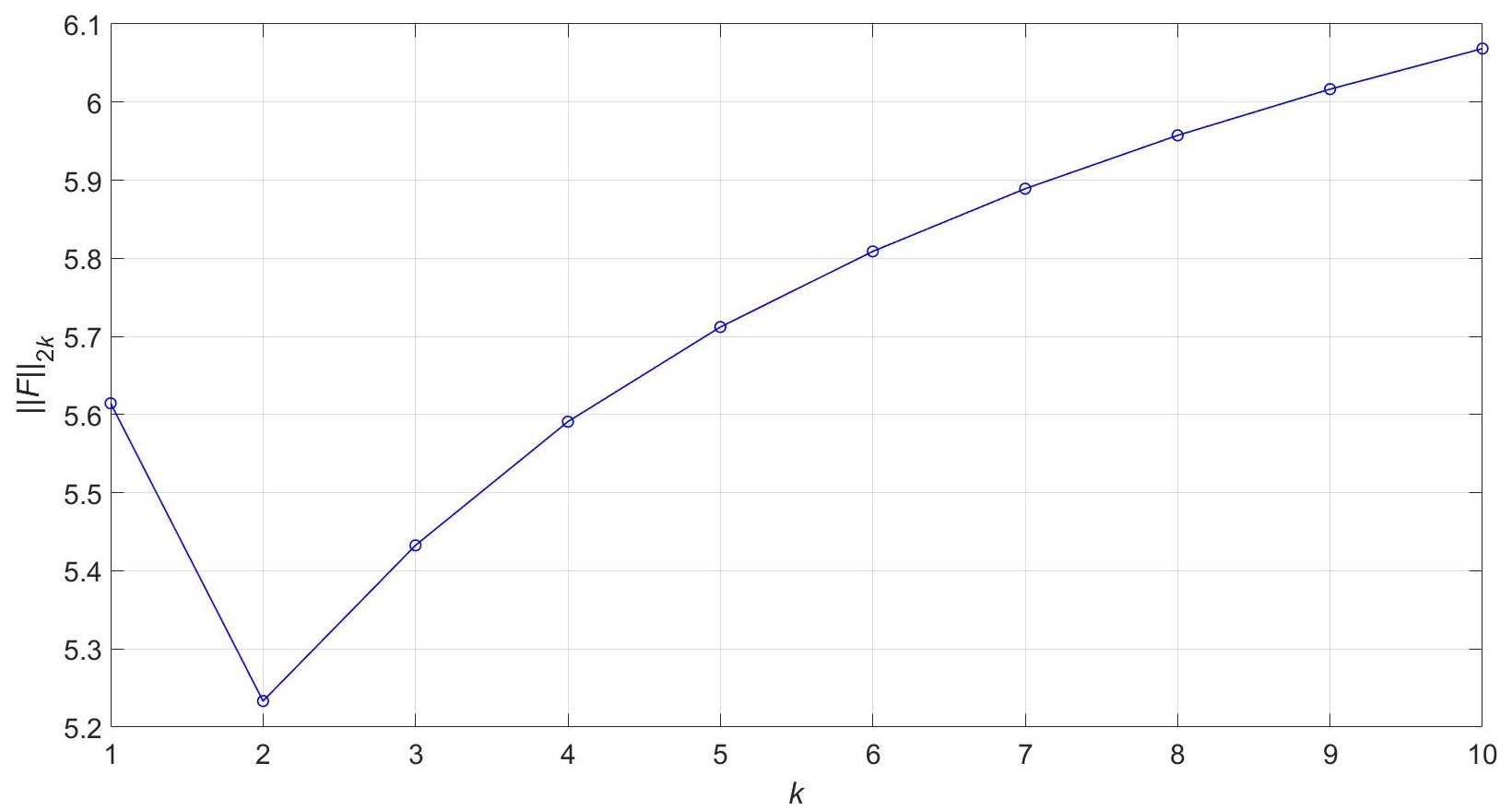}
\caption{The first ten Hardy-Schatten norms $\|F\|_{2k}$, with $k= 1, \ldots, 10$, for the model system computed using Theorem~\ref{th:FG}.  }
\label{fig:h2k}
\end{center}
\end{figure}
In Fig.~\ref{fig:hhgap}, 
these results are compared (in terms of a relative gap) with the alternative method of computing the norms through the combination of (\ref{FXi1}) and Theorem~\ref{th:diff}. This comparison provides an experimental cross-verification of both approaches.

\begin{figure}[htbp]
\begin{center}
\includegraphics[width=8cm]{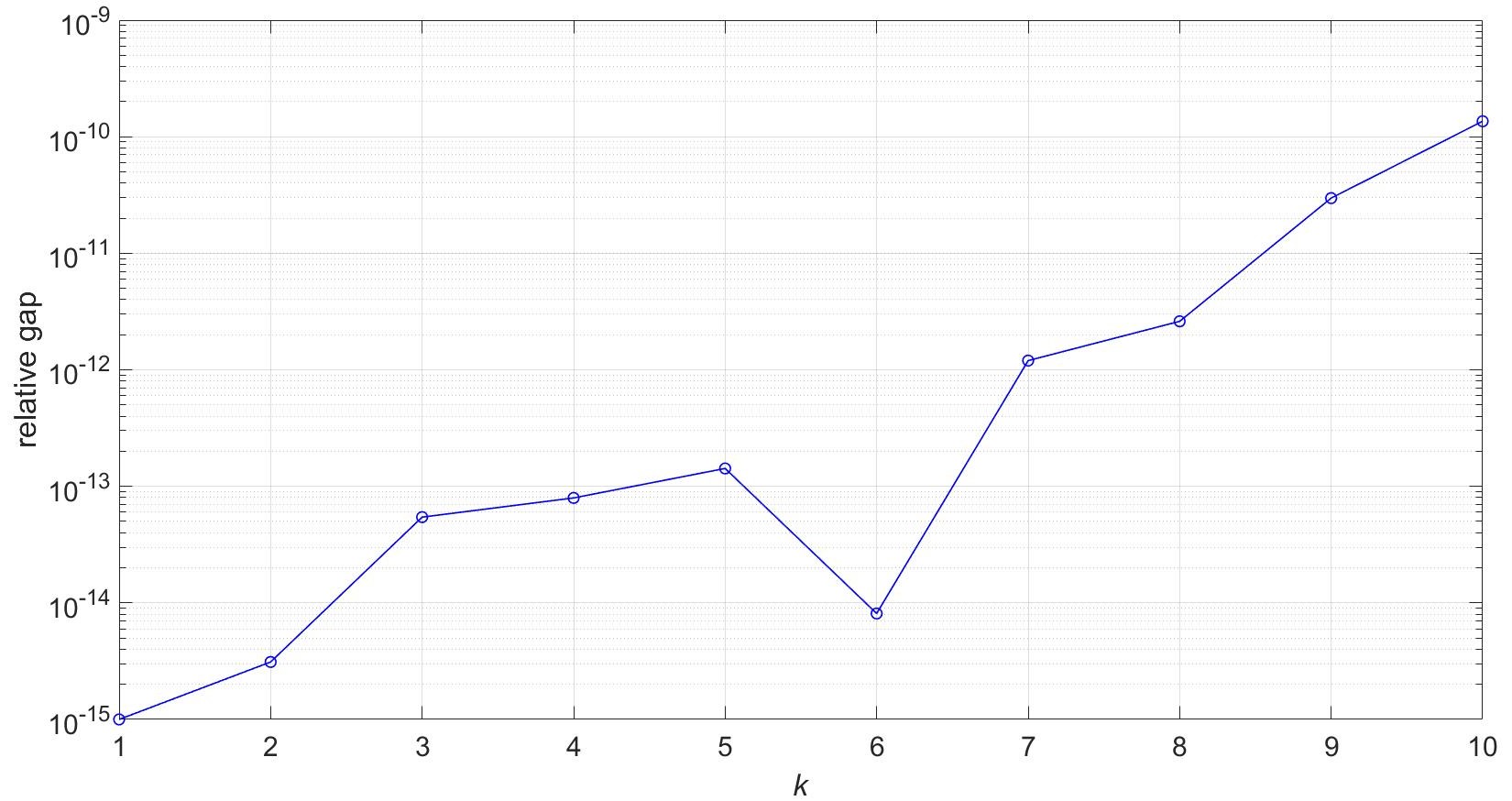}
\caption{The relative gap (on the logarithmic scale) between the norms $\|F\|_{2k}$, with $k= 1, \ldots, 10$, of the model system computed in two ways: using Theorem~\ref{th:FG} and the alternative approach of (\ref{FXi1}) together with  Theorem~\ref{th:diff}.  }
\label{fig:hhgap}
\end{center}
\end{figure}

\section{Conclusion}
\label{sec:conc}

For linear stochastic systems with a stationary Gaussian output process, we have considered a class of covariance-analytic performance criteria, which  involve the trace of a cost-shaping  function of the output spectral density and contain the standard mean square and risk-sensitive costs as particular cases. With such a cost being closely related to the Hardy-Schatten norms  of the system, we have discussed their links with the asymptotic growth rates of the output energy cumulants. We have also considered the worst-case mean square values of the system output in the case of statistical uncertainty about the input noise using covariance-analytic functionals with convex conjugate pairs of cost-shaping functions.  For a class of strictly proper finite-dimensional systems, with state-space dynamics  governed by linear SDEs,  we have developed a method for recursively computing the Hardy-Schatten norms through a modification of a recently proposed technique of rearranging cascaded linear systems, reminiscent of the Wick ordering of quantum mechanical annihilation and creation operators. This computational procedure involves a recurrence sequence of solutions to ALEs and represents the covariance-analytic functional as the squared  $\cH_2$-norm of an auxiliary cascaded system.
These results have been compared with a different approach using higher-order derivatives of stabilising solutions of a parameter-dependent ARE and illustrated by a numerical example. 


\end{document}